\definecolor{dark-gray}{gray}{0.3}
\definecolor{dkgray}{rgb}{.4,.4,.4}
\definecolor{dkblue}{rgb}{0,0,.5}
\definecolor{medblue}{rgb}{0,0,.75}
\definecolor{rust}{rgb}{0.5,0.1,0.1}
\newtheorem{theorem}{Theorem}[section]
\newtheorem{proposition}[theorem]{Proposition}
\theoremstyle{definition}
\newtheorem{remark}[theorem]{Remark}
\newtheorem{mywarning}[theorem]{Warning}
\numberwithin{equation}{section} 
\algrenewcommand\alglinenumber[1]{\sf\footnotesize\color{gray}{#1}}
\algrenewcommand\algorithmicrequire{\textbf{Input:}}
\algrenewcommand\algorithmicensure{\textbf{Output:}}
\providecommand{\mathbold}[1]{\bm{#1}}  %
\renewcommand{\phi}{\varphi}
\newcommand{\eps}{\varepsilon}
\newcommand{\half}{\tfrac{1}{2}}
\newcommand{\econst}{\mathrm{e}}
\newcommand{\Id}{\mathbf{I}}
\newcommand{\coll}[1]{\mathscr{#1}}
\providecommand{\mathbbm}{\mathbb} %
\newcommand{\R}{\mathbbm{R}}
\newcommand{\C}{\mathbbm{C}}
\newcommand{\N}{\mathbbm{N}}
\newcommand{\Z}{\mathbbm{Z}}
\newcommand{\abs}[1]{\left\vert {#1} \right\vert}
\newcommand{\diff}[1]{\mathrm{d}{#1}}
\newcommand{\idiff}[1]{\, \diff{#1}}
\newcommand{\Prob}[1]{\mathbbm{P}\left\{{#1}\right\}}
\newcommand{\Expect}{\operatorname{\mathbb{E}}}
\newcommand{\normal}{\textsc{normal}}
\newcommand{\vct}[1]{\mathbold{#1}}
\newcommand{\mtx}[1]{\mathbold{#1}}
\newcommand{\lspan}[1]{\operatorname{span}{#1}}
\newcommand{\range}{\operatorname{range}}
\newcommand{\srank}{\operatorname{srk}}
\newcommand{\rank}{\operatorname{rank}}
\newcommand{\diag}{\operatorname{diag}}
\newcommand{\norm}[1]{\left\Vert {#1} \right\Vert}
\title[Randomized Block Krylov Methods for Extreme Eigenvalues]{Randomized Block Krylov Methods \\
for Approximating Extreme Eigenvalues}
\author[J.~A.~Tropp]{Joel~A.~Tropp}
\date{30 September 2017.  Revised 26 February 2018 and 1 September 2020 and 1 October 2021.}
\subjclass[2010]{Primary: 65F30. Secondary: 68W20, 60B20.}
\keywords{Eigenvalue computation; Krylov subspace method; Lanczos method; numerical analysis; randomized algorithm; singular value computation}
\begin{document}

\begin{abstract}
Randomized block Krylov subspace methods form a powerful class of algorithms for computing
the extreme eigenvalues of a symmetric matrix or the extreme singular values of a general matrix.
The purpose of this paper is to develop
new theoretical bounds on the performance of randomized block Krylov subspace methods for these problems.
For matrices with polynomial spectral decay, the randomized block Krylov method can
obtain an accurate spectral norm estimate using only a \emph{constant}
number of steps (that depends on the decay rate and the accuracy).  Furthermore, the analysis reveals
that the behavior of the algorithm depends in a delicate way on the block size.
Numerical evidence confirms these predictions.
\end{abstract}

\maketitle

\section{Motivation and Main Results}

Randomized block Krylov methods have emerged as a powerful tool for
spectral computation and
matrix approximation~\cite{RST09:Randomized-Algorithm,HMT11:Finding-Structure,MRT11:Randomized-Algorithm,HMST11:Algorithm-Principal, MM15:Randomized-Block,WZZ15:Improved-Analyses,DIKM18:Structural-Convergence,DI19:Low-Rank-Matrix,YGL18:Superlinear-Convergence,MT20:Randomized-Numerical}.
At present, our understanding of these methods is more rudimentary
than our understanding of simple Krylov subspace methods in either the deterministic
or random setting.  %
Our aim is to develop \textit{a priori} bounds that better explain the remarkable
performance of randomized block Krylov methods.

This paper focuses on the most basic questions. How well can we
estimate the maximum (or minimum) eigenvalue of a symmetric matrix 
using a randomized block Krylov method?
How well can we estimate the maximum (or minimum) singular value of a general matrix?
We present the first detailed analysis that addresses these questions.  The argument streamlines
and extends the influential work~\cite{KW92:Estimating-Largest} of Kuczi{\'n}ski \& Wo{\'z}niakowski
on the simple Krylov method with one randomized starting vector.  We discover that increasing
the block size offers some clear advantages over the simple method.

Randomized block Krylov methods have also been promoted as a tool
for low-rank matrix approximation.  Theoretical analysis of these
algorithms appears in several recent
papers~\cite{MM15:Randomized-Block,DIKM18:Structural-Convergence,DI19:Low-Rank-Matrix,YGL18:Superlinear-Convergence}.
In an unpublished companion paper~\cite{Tro18:Analysis-Randomized-TR},
we complement these works with a more detailed analysis that offers
several new insights.  The mathematical approach is different from
the case of extreme eigenvalues, and it involves additional ideas
from the analysis of the randomized SVD algorithm~\cite{HMT11:Finding-Structure}.
In collaboration with Robert J.~Webber, we are currently preparing
a review article~\cite{TW21:Randomized-Block} with a definitive treatment of randomized
block Krylov methods for low-rank matrix approximation.

\subsection{Block Krylov Methods for Computing the Maximum Eigenvalue}

Let us begin with a mathematical description of a block Krylov method for estimating
the maximum eigenvalue of a symmetric matrix.  See Section~\ref{sec:implementation}
for a brief discussion about implementations.

\subsubsection{Block Krylov Subspaces}

Suppose that we are given a symmetric matrix $\mtx{A} \in \R^{n \times n}$.
Choose a \emph{test matrix} $\mtx{B} \in \R^{n \times \ell}$, where the
number $\ell$ is called the \emph{block size}.  Select a parameter
$q \in \N$ that controls the \emph{depth} of the Krylov subspace.
In concept, the block Krylov method constructs the matrix
\begin{equation} \label{eqn:krylov-matrix}
\mtx{S}_q(\mtx{A}; \mtx{B}) := \begin{bmatrix}
	\mtx{B} & \mtx{AB} & \mtx{A}^2 \mtx{B} & \dots & \mtx{A}^q \mtx{B}
\end{bmatrix}
	\in \R^{n \times (q+1)\ell}.
\end{equation}
The range $K_q$ of the matrix $\mtx{S}_q$ is called a \emph{block Krylov subspace}:
\begin{equation} \label{eqn:krylov-subspace}
K_q(\mtx{A}; \mtx{B}) := \range(\mtx{S}_q) \subset \R^{n}.
\end{equation}
The block Krylov subspace admits an alternative representation in terms of polynomials:
\begin{equation} \label{eqn:krylov-poly}
K_q(\mtx{A}; \mtx{B}) = \operatorname{span}\big\{ \range\big( \phi(\mtx{A}) \mtx{B} \big) : \phi \in \mathcal{P}_q \big\},
\end{equation}
where $\mathcal{P}_q$ is the linear space of real polynomials with degree at most $q$.

\begin{mywarning}[Notation]
Be aware that our notation differs slightly from the traditional usage.
In $\mtx{S}_q(\mtx{A}; \mtx{B})$ and in $K_q(\mtx{A}; \mtx{B})$, the highest
matrix power is $\mtx{A}^q$, rather than the usual $\mtx{A}^{q - 1}$.
\end{mywarning}

\subsubsection{Invariance Properties of Krylov Subspaces}
\label{sec:krylov-invar}

Krylov subspaces have remarkable invariance properties that help explain their computational value.

\begin{itemize}
\item	The block Krylov subspace only depends on the range of the test matrix:
$$
K_q(\mtx{A}; \mtx{BT})
	= K_q(\mtx{A}; \mtx{B})
	\quad\text{for all nonsingular $\mtx{T} \in \R^{\ell \times \ell}$.}
$$

\item	The block Krylov subspace co-varies with the orientation of the matrices:
$$
K_q(\mtx{UAU}^*; \mtx{UB})
	= \mtx{U} K_q(\mtx{A}; \mtx{B}) 
	\quad\text{for all orthogonal $\mtx{U} \in \R^{n \times n}$.}
$$

\item	The block Krylov subspace is invariant under ``affine'' transformations of the input matrix:
$$
K_q(\alpha \mtx{A} + \beta \Id; \mtx{B})	
	= K_q(\mtx{A}; \mtx{B})
	\quad\text{for all $\alpha, \beta \in \R$.}
$$
\end{itemize}

\noindent
These facts follow directly from the definition~\eqref{eqn:krylov-matrix}--\eqref{eqn:krylov-subspace}
of the Krylov subspace and the representation~\eqref{eqn:krylov-poly} using polynomials.
For example, see~\cite[Sec.~12.2.2]{Par98:Symmetric-Eigenvalue} for the simple case $\ell = 1$.

\subsubsection{Computing Maximum Eigenvalues}

Block Krylov subspaces support a wide range of matrix computations.
The core idea is to compress the input matrix to the Krylov subspace
and to perform calculations on the (small) compressed matrix.  In other
words, Krylov methods belong to the class of Ritz--Galerkin methods;
see~\cite{Lan50:Iteration-Method,Pai71:Computation-Eigenvalues,Saa80:Rates-Convergence,Par98:Symmetric-Eigenvalue,Saa11:Numerical-Methods}.

In particular, we can obtain an estimate $\xi_{\max}(\mtx{A}; \mtx{B}; q)$
for the maximum eigenvalue $\lambda_{\max}(\mtx{A})$ of the input matrix
by maximizing the Rayleigh quotient of $\mtx{A}$ over the block Krylov subspace $K_q(\mtx{A}; \mtx{B})$:
\begin{equation} \label{eqn:eigenvalue-estimate}
\xi_{\max}(\mtx{A}; \mtx{B}; q)
	:= \max_{\vct{v} \in K_q(\mtx{A}; \mtx{B})} \frac{\vct{v}^* \mtx{A} \vct{v}}{\vct{v}^*\vct{v}}.
\end{equation}
The symbol ${}^*$ denotes the transpose of a matrix or vector,
and we instate the convention that $0/0 = 0$.  We may suppress the
dependence of $\xi_{\max}$ on $\mtx{A}$, $\mtx{B}$, or $q$ when
they are clear from context.

The Rayleigh--Ritz theorem~\cite[Cor.~III.2.1]{Bha97:Matrix-Analysis} implies that
the maximum eigenvalue estimate~\eqref{eqn:eigenvalue-estimate} always
satisfies
\begin{equation} \label{eqn:underestimate}
\lambda_{\min}(\mtx{A}) \leq \xi_{\max}(\mtx{A}; \mtx{B}; q) \leq \lambda_{\max}(\mtx{A}).
\end{equation}
The goal of our analysis is to understand how well $\xi_{\max}(\mtx{A}; \mtx{B}; q)$
\emph{approximates} the maximum eigenvalue $\lambda_{\max}(\mtx{A})$ as a function
of the block size $\ell$ and the depth $q$ of the Krylov space.

Provided that $\xi_{\max} \approx \lambda_{\max}$, any vector $\vct{v}_{\max}$
that maximizes the Rayleigh quotient in~\eqref{eqn:eigenvalue-estimate}
has a component in the invariant subspace of $\mtx{A}$ spanned by the large eigenvalues.
More precisely,
$$
\left( \frac{\norm{ \mtx{P}_{> \lambda} \vct{v}_{\max} }}{\norm{\vct{v}_{\max}}} \right)^2
	\geq \frac{\xi_{\max} - \lambda}{ \lambda_{\max} - \lambda}
	\quad\text{for all $\lambda < \xi_{\max}$.}
$$
We have written $\norm{\cdot}$ for the $\ell_2$ norm and $\mtx{P}_{> \lambda}$
for the spectral projector onto the invariant subspace of $\mtx{A}$ associated with the eigenvalues that
strictly exceed $\lambda$.

\subsubsection{Invariance Properties of the Eigenvalue Estimate}

The eigenvalue estimate $\xi_{\max}(\mtx{A}; \mtx{B}; q)$ inherits
some invariance properties from the block Krylov subspace.  These
facts can help us develop effective implementations of the algorithm
and to analyze their performance.

\begin{itemize}
\item	For fixed depth $q$, the estimate only depends on the range of the test matrix $\mtx{B}$:
\begin{equation} \label{eqn:range-invar}
\xi_{\max}(\mtx{A}; \mtx{BT}) = \xi_{\max}(\mtx{A}; \mtx{B})
	\quad\text{for all nonsingular $\mtx{T} \in \R^{\ell \times \ell}$.}
\end{equation}

\item	For fixed depth $q$, the estimate does not depend on the orientation of $\mtx{A}$ and $\mtx{B}$
in the sense that
\begin{equation} \label{eqn:rotation-invar}
\xi_{\max}(\mtx{UAU}^*; \mtx{UB}) = \xi_{\max}(\mtx{A}; \mtx{B}) 
	\quad\text{for all orthogonal $\mtx{U} \in \R^{n \times n}$.}
\end{equation}
\item	For fixed depth $q$, the estimate covaries with \emph{increasing} ``affine'' transformations of $\mtx{A}$:
\begin{equation} \label{eqn:affine-invar}
\xi_{\max}(\alpha \mtx{A} + \beta \Id; \mtx{B})
	= \alpha \xi_{\max}(\mtx{A}; \mtx{B}) + \beta
	\quad\text{for all $\alpha \geq 0$ and $\beta \in \R$.}
\end{equation}
\end{itemize}

\noindent
These results all follow immediately from the invariance properties of the Krylov
subspace (Section~\ref{sec:krylov-invar}) and the definition~\eqref{eqn:eigenvalue-estimate}
of the eigenvalue estimate.  See~\cite{KW92:Estimating-Largest} for the simple case $\ell = 1$.

\subsubsection{A Random Test Matrix}

To ensure that we can estimate the maximum eigenvalue of
an \emph{arbitrary} symmetric input matrix $\mtx{A}$, we draw the test
matrix $\mtx{B}$ that generates the Krylov subspace \emph{at random}.

How should we select the distribution?
Observe that the eigenvalue estimate $\xi_{\max}(\mtx{A}; \mtx{B}; q)$
only depends on the range of the test matrix $\mtx{B}$
because of the property~\eqref{eqn:range-invar}.
Furthermore, the property~\eqref{eqn:rotation-invar} shows that the
eigenvalue estimate is invariant under rotations.  Therefore,
we can choose any random test matrix whose range has a rotationally
invariant distribution.  This idea is extracted from~\cite{KW92:Estimating-Largest};
see Section~\ref{app:rot-invar} for the justification.

We will consider a standard normal test matrix $\mtx{\Omega} \in \R^{n \times \ell}$.
That is, the entries of $\mtx{\Omega}$ are statistically independent Gaussian random variables,
each with mean zero and variance one.  It is well known that the range
of this random matrix has a rotationally invariant distribution.
The goal of this paper is to study the behavior of the random eigenvalue estimate
$\xi_{\max}(\mtx{A}; \mtx{\Omega}; q)$.

\begin{remark}[Other Test Matrices]
Our analysis and detailed results depend heavily on the choice
of a standard normal test matrix $\mtx{\Omega}$.
In practice, we can achieve similar
empirical behavior from test matrices with ``structured'' distributions
that require less storage or that have fast matrix--vector multiplies.
For randomized Krylov subspace methods, the computational benefit of using
a structured test matrix is limited because we need to perform repeated
multiplications with the input matrix $\mtx{A}$ to generate the Krylov matrix;
cf.~\eqref{eqn:krylov-matrix}.  See~\cite[Secs.~4.6 and 7.4]{HMT11:Finding-Structure}
or~\cite[Secs.~9 and~11.5]{MT20:Randomized-Numerical}
for some discussion about other random test matrices.
\end{remark}

\subsubsection{Implementation}
\label{sec:implementation}

For completeness, we describe the simplest stable implementation of a block Krylov
method for computing the largest eigenvalue of a symmetric matrix.
See Algorithm~\ref{alg:maxeig} for pseudocode; %
this approach is adapted from~\cite{RST09:Randomized-Algorithm,HMST11:Algorithm-Principal}.
Here is a summary of the costs: %

\begin{itemize}
\item 	A total of $q$ matrix--matrix multiplies between $\mtx{A}$ and an $n \times \ell$ matrix,
plus another multiplication between $\mtx{A}$ and an $n \times (q+1)\ell$ matrix.
The arithmetic cost depends on whether the matrix $\mtx{A}$ supports a fast multiplication
operation.  For example, the algorithm is more efficient when $\mtx{A}$ is sparse.

\item	Repeated orthogonalization of $(q+1) \ell$ vectors of length $n$
with total cost $\mathcal{O}(q^2 \ell^2 n)$ operations.

\item	Solution of a (block-tridiagonal) maximum eigenvalue problem at a cost of $\mathcal{O}(q^2 \ell^2)$ operations.

\item	Maintenance of the matrix $\mtx{S}_q$, which requires $\mathcal{O}(q\ell n)$ units of storage.
\end{itemize}

\noindent
More refined algorithms can reduce the resource usage markedly;
see~\cite[Sec.~11.7]{MT20:Randomized-Numerical}
or~\cite{TW21:Randomized-Block} for discussion.

As noted, Krylov subspace methods are particularly valuable when we have an efficient
procedure for computing matrix--vector products with $\mtx{A}$.
On many contemporary computer architectures, the cost of performing
a product with several vectors is similar to the cost of a product
with a single vector.  In this setting, we can take advantage of
the improved behavior of a block method almost for free.

We refer to the books~\cite{Par98:Symmetric-Eigenvalue,BDD+00:Templates-Solution,
Saa11:Numerical-Methods,GVL13:Matrix-Computations} and the
paper~\cite{HMST11:Algorithm-Principal} for more discussion
and references.

\begin{algorithm}[tb]
  \caption{A block Krylov method for computing the largest eigenvalue of a symmetric matrix}
    \label{alg:maxeig}
  \begin{algorithmic}[1]
    \Require{Symmetric $n \times n$ matrix $\mtx{A}$; block size $\ell$; depth $q$}
    \Ensure{Estimate $\xi$ of largest eigenvalue}
\vspace{0.5pc}
    \Function{BlockKrylovMaxEig}{$\mtx{A}$, $\ell$, $q$}
    	\State	$[\mtx{B}_0,\sim] \gets \texttt{qr\_econ}( \texttt{randn}(\texttt{size}(\mtx{A}, 1), \ell) )$
			\Comment{Draw $n \times \ell$ random orthonormal matrix}
		\State	$\mtx{S}_0 = \mtx{B}_0$
			\Comment{Orthonormal basis for Krylov subspace}
		\For{$t \gets 1, 2, 3, \dots, q$}
			\State	$\mtx{B}_t \gets \mtx{A} \mtx{B}_{t-1}$
				\Comment{Form next block of Krylov matrix by multiplication}
			\State  $\mtx{B}_t \gets \mtx{B}_t - \mtx{S}_{t-1} (\mtx{S}_{t-1}^* \mtx{B}_t)$
				\Comment{Orthogonalize against subspace by double GS}
			\State	$\mtx{B}_t \gets \mtx{B}_t - \mtx{S}_{t-1} (\mtx{S}_{t-1}^* \mtx{B}_t)$
				\Comment{\textit{Sic!}}
			\State  $[\mtx{B}_t, \sim] \gets \texttt{qr\_econ}(\mtx{B}_t)$
				\Comment{Orthogonalize what's left}
			\State	$\mtx{S}_t \gets [\mtx{S}_{t-1}, \mtx{B}_t]$
				\Comment{Extend basis with new block}
		\EndFor
		\State	$\mtx{H} \gets \mtx{S}_q^* (\mtx{A} \mtx{S}_q)$
			\Comment{Form block-tridiagonal Rayleigh quotient matrix}
		\State	$[\xi, \vct{v}] \gets \texttt{maxeig}(\mtx{H})$
			\Comment{Find maximum eigenpair}
    \EndFunction 
	\vspace{0.25pc}
\end{algorithmic}
\end{algorithm}

\subsection{The Role of the Spectrum}

Owing to invariance, the theoretical behavior of the eigenvalue
estimate $\xi_{\max}(\mtx{A}; \mtx{\Omega}; q)$ depends only
on the spectrum of the input matrix $\mtx{A}$.  In this section,
we develop this idea further and introduce some spectral features
that affect the performance of the eigenvalue estimate.

\begin{mywarning}[Numerical Behavior]
The numerical performance of a (block) Krylov method can be very
complicated; for example, see~\cite[Chap.~13]{Par98:Symmetric-Eigenvalue}
or~\cite{Meu06:Lanczos-Conjugate}.
The current paper does not address numerics. %
\end{mywarning}

\subsubsection{Invariance Properties of the Random Eigenvalue Estimate}

The random estimate $\xi_{\max}(\mtx{A}; \mtx{\Omega}; q)$ of the maximum
eigenvalue has several invariance properties that allow us to simplify the analysis.

First, the rotation invariance~\eqref{eqn:rotation-invar} of the eigenvalue
estimate and the rotational invariance of the range of $\mtx{\Omega}$ imply
that
\begin{equation} \label{eqn:input-diag}
\xi_{\max}(\mtx{A}; \mtx{\Omega}) \sim \xi_{\max}(\mtx{\Lambda}; \mtx{\Omega})
\quad\text{where $\mtx{A} = \mtx{U\Lambda U}^*$ is an eigenvalue factorization.}
\end{equation}
The symbol $\sim$ signifies equality of distribution for two random variables.
In other words, the maximum eigenvalue estimate depends only on the eigenvalues
of the input matrix---but not the eigenvectors.

Second, owing to the affine covariance property~\eqref{eqn:affine-invar},
the eigenvalue estimate $\xi_{\max}(\mtx{A}; \mtx{\Omega}; q)$
only depends on the ``shape'' of the spectrum of $\mtx{A}$,
but not its location or scale.  As a consequence, we must assess the
behavior of the eigenvalue estimate in terms of spectral features
that are affine invariant.

\subsubsection{Spectral Features of the Input Matrix}

To express the results of our analysis,
we introduce terminology for some spectral features of the
symmetric matrix $\mtx{A} \in \R^{n \times n}$.
First, let us instate compact notation for the eigenvalues
of $\mtx{A}$:
$$
a_i := \lambda_i(\mtx{A})
\quad\text{for $i = 1, \dots, n$,}
\quad\text{and}\quad
a_{\max} := a_1 \geq a_2 \geq \dots \geq a_n =: a_{\min}.
$$
The map $\lambda_i(\cdot)$ returns the $i$th largest eigenvalue of a symmetric matrix.

Next, we define some functions of the eigenvalue spectrum:

\begin{itemize}
\item	The \emph{spectral range} $\rho$ of the input matrix is the distance between the extreme eigenvalues.  That is, $\rho := a_{\max} - a_{\min}$. %

\item	The \emph{spectral gap} $\gamma$ is the relative difference between the maximum eigenvalue
and the next distinct eigenvalue:
\begin{equation} \label{eqn:spectral-gap}
\gamma := \frac{a_{\max} - a_{m+1}}{a_{\max} - a_{\min}}
\quad\text{where}\quad
a_{\max} = a_{m} > a_{m+1}.
\end{equation}
If $\mtx{A}$ is a multiple of the identity, then $\gamma = 0$.  Note that $\gamma \in [0, 1]$.

\item	Let $\nu$ be a nonnegative number.  The \emph{$\nu$-stable rank} is a continuous measure
of the ``dimension'' of the range of $\mtx{A} - a_{\min} \Id$ that reflects how quickly the spectrum
decays.  It is defined as
\begin{equation} \label{eqn:srank}
\srank(\nu) := \sum\limits_{i=1}^n \left( \frac{a_i - a_{\min}}{a_{\max} - a_{\min}} \right)^{2\nu}.
\end{equation}
If $\mtx{A}$ is a multiple of the identity, we define $\srank(\nu) = 0$.
Otherwise, $1 \leq \srank(\nu) \leq \rank(\mtx{A} - a_{\min} \Id) \leq n - 1$.
When the eigenvalues of $\mtx{A}$ decay at a polynomial rate, the stable rank can be much smaller
than the rank for an appropriate choice of $\nu$.
See Section~\ref{sec:numerics-setup} for some illustrations.

\item	Let $\xi$ be any estimate for the largest eigenvalue $a_{\max}$ of the input matrix $\mtx{A}$.
We measure the error in the estimate relative to the spectral range:
\begin{equation} \label{eqn:error-def}
\mathrm{err}(\xi) := \frac{a_{\max} - \xi}{a_{\max} - a_{\min}}.
\end{equation}
The relative error in the Krylov estimate $\xi = \xi_{\max}(\mtx{A}; \mtx{B}; q)$
falls in the interval $[0, 1]$ because of~\eqref{eqn:underestimate}. 
\end{itemize}

\noindent
The spectral gap, the stable rank, and the error measure are all
invariant under increasing affine transformations of the spectrum of $\mtx{A}$.
We suppress the dependence of these quantities on the input matrix $\mtx{A}$,
unless emphasis is required.

\begin{remark}[History]
Concepts related to the $\nu$-stable rank originally appeared in the
analysis literature~\cite{BT87:Invertibility-Large,BT91:Problem-Kadison},
and they now play an important role in randomized linear algebra~\cite[Sec.~2.6]{MT20:Randomized-Numerical}.
\end{remark}

\subsubsection{An Example}
\label{sec:laplacian}

Let us illustrate these concepts with a canonical example.
A common task in computational physics is to compute the \textit{minimum}
eigenvalue of (the discretization of) an elliptic operator.

Consider the standard $n$-point second-order finite-difference discretization $\mtx{L} \in \R^{n \times n}$
of the one-dimensional Laplacian on $[0, 1]$ with homogeneous boundary conditions.
Writing $h = 1/(n+1)$, the eigenvalues of the matrix $\mtx{L}$ are
$$
\lambda_{j}(\mtx{L}) = \frac{2}{h^2} \left[ 1 + \cos( \pi j h ) \right]
\quad\text{for $j = 1, \dots, n$.}
$$
The spectral gap between the smallest and second smallest eigenvalue
is proportional to $h^2$. %
The spectrum has essentially no decay, which is visible in the fact that
$\srank(-\mtx{L}; 1)$ is proportional to $1/h$.
The tiny spectral gap and large stable rank suggest that the maximum eigenvalue problem for $-\mtx{L}$
will be challenging.

A natural remedy is to attempt to compute the maximum eigenvalue
of the inverse $\mtx{L}^{-1}$.  Independent of $n$,
the spectral gap between the largest and second largest
eigenvalue satisfies $\gamma( \mtx{L}^{-1} ) \approx 0.75$.
The stable rank $\srank(\mtx{L}^{-1}; 1) \approx 1.1$.
The large spectral gap and small stable rank suggest that the maximum eigenvalue
problem for $\mtx{L}^{-1}$ will be quite easy, regardless of the dimension.

\subsection{Matrices with Few Distinct Eigenvalues}

Before continuing, we must address an important special case.
When the input matrix has few distinct eigenvalues, the block Krylov method
computes the maximum eigenvalue of the matrix perfectly.

\begin{proposition}[Randomized Block Krylov: Matrices with Few Eigenvalues] \label{prop:few-eigs}
Let $\mtx{A} \in \R^{n \times n}$ be a symmetric matrix.  Fix 
the block size $\ell \geq 1$ and the depth $q \geq 0$
of the block Krylov subspace. %
Draw a standard normal matrix $\mtx{\Omega} \in \R^{n \times \ell}$.
If $\mtx{A}$ has $q + 1$ or fewer distinct eigenvalues, then $\mathrm{err}(\xi_{\max}(\mtx{A}; \mtx{\Omega};q)) = 0$ with probability one.
\end{proposition}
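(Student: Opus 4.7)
The plan is to exhibit a polynomial of degree at most $q$ that, when applied to $\mtx{A}$, produces the spectral projector onto the eigenspace of $a_{\max}$. This lets us realize an eigenvector for $a_{\max}$ inside the Krylov subspace almost surely, and then the Rayleigh--Ritz bound~\eqref{eqn:underestimate} closes the argument.

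Concretely, let $\mu_1 > \mu_2 > \dots > \mu_k$ be the distinct eigenvalues of $\mtx{A}$, where $\mu_1 = a_{\max}$ and $k \leq q+1$ by hypothesis. I would define the Lagrange-type polynomial
\begin{equation}
p(x) := \prod_{j=2}^{k} \frac{x - \mu_j}{\mu_1 - \mu_j},
\end{equation}
which has degree $k-1 \leq q$, so $p \in \mathcal{P}_q$. By the spectral mapping theorem, $p(\mtx{A}) = \mtx{P}_{\max}$, the orthogonal projector onto the eigenspace associated with $a_{\max}$. The polynomial representation~\eqref{eqn:krylov-poly} of the block Krylov subspace then gives
\begin{equation}
\range(\mtx{P}_{\max} \mtx{\Omega}) = \range(p(\mtx{A})\mtx{\Omega}) \subseteq K_q(\mtx{A}; \mtx{\Omega}).
\end{equation}

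Next I would argue that $\mtx{P}_{\max} \mtx{\Omega} \neq \zeromtx$ almost surely. Let $m \geq 1$ be the multiplicity of $a_{\max}$; then $\mtx{P}_{\max}$ has rank $m$, and the first column $\mtx{P}_{\max} \vct{\omega}_1$ is a Gaussian vector on an $m$-dimensional subspace, hence nonzero with probability one. Any such nonzero vector $\vct{v} \in \range(\mtx{P}_{\max})$ satisfies $\mtx{A}\vct{v} = a_{\max} \vct{v}$, so its Rayleigh quotient equals $a_{\max}$. Because $\vct{v} \in K_q(\mtx{A};\mtx{\Omega})$, the definition~\eqref{eqn:eigenvalue-estimate} forces $\xi_{\max}(\mtx{A};\mtx{\Omega};q) \geq a_{\max}$, and combining with the upper bound in~\eqref{eqn:underestimate} yields $\xi_{\max} = a_{\max}$, i.e.\ $\mathrm{err}(\xi_{\max}) = 0$.

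There is no real obstacle here: the polynomial $p$ is elementary, the containment in $K_q$ is immediate from~\eqref{eqn:krylov-poly}, and the only probabilistic input is the almost-sure nonvanishing of a Gaussian vector in a positive-dimensional subspace. The degenerate case $\mtx{A} = c\,\Id$ (so $k=1$) is covered by taking $p \equiv 1$, since then every vector in the Krylov subspace is an eigenvector with eigenvalue $c$; one must merely note that the convention $\mtx{P}_{\max} = \Id$ in that situation makes the argument above degenerate gracefully.
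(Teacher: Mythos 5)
Your proof is correct and uses the same core idea as the paper: a Lagrange interpolating polynomial of degree $\leq q$ that annihilates every eigenvalue except $a_{\max}$. The paper obtains the result as a one-line corollary of the error representation~\eqref{eqn:err-poly} (after reducing the block Krylov space to a simple Krylov space generated by a cleverly chosen $\vct{x}\in\range(\mtx{\Omega})$, and normalizing so $a_1=1$), by plugging in $\phi_0(s) = \prod_{i=2}^r (s-\mu_i)/(1-\mu_i)$ and observing the numerator vanishes while the denominator is $X_1^2>0$ a.s. Your version is more self-contained: rather than passing through the scalar-vector reduction and the error formula, you argue directly from the polynomial characterization~\eqref{eqn:krylov-poly} that $\range(p(\mtx{A})\mtx{\Omega}) = \range(\mtx{P}_{\max}\mtx{\Omega})$ lies in $K_q(\mtx{A};\mtx{\Omega})$ and contains a genuine top eigenvector almost surely, then invoke Rayleigh--Ritz. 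The trade-off is that your argument requires no prior machinery, while the paper's proof reuses infrastructure it needs anyway for Theorems~\ref{thm:gapfree} and~\ref{thm:gap}. Both are valid; there is no gap in your argument, and your handling of the degenerate $\mtx{A}=c\,\Id$ case is fine.
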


\noindent
This type of result is well known (e.g., see~\cite{KW92:Estimating-Largest}),
but we include a short proof in Section~\ref{sec:error-formula}.

\subsection{Matrices without a Spectral Gap}
\label{sec:gapfree}

Our first result gives probabilistic bounds for the maximum eigenvalue
estimate $\xi_{\max}(\mtx{A}; \mtx{\Omega}; q)$ without any additional
assumptions.  In particular, it does not require a lower bound on the
spectral gap $\gamma$.

\begin{theorem}[Randomized Block Krylov: Maximum Eigenvalue Estimate] \label{thm:gapfree}
Instate the following hypotheses.

\begin{itemize}
\item	Let $\mtx{A} \in \R^{n \times n}$ be a symmetric input matrix.

\item	Draw a standard normal test matrix $\mtx{\Omega} \in \R^{n \times \ell}$ with block size $\ell$.

\item	Fix the depth parameter $q \geq 0$, and let $q = q_1 + q_2$ be an arbitrary nonnegative integer partition.
\end{itemize}

\noindent
We have the following probability bounds for the estimate
$\xi_{\max}(\mtx{A}; \mtx{\Omega}; q)$, defined in~\eqref{eqn:eigenvalue-estimate},
of the maximum eigenvalue of the input matrix.

\begin{enumerate}
\item	The relative error~\eqref{eqn:error-def} in the eigenvalue estimate satisfies the probability bound
\begin{equation} \label{eqn:intro-prob-gapfree}
\Prob{ \mathrm{err}(\xi_{\max}(\mtx{A}; \mtx{\Omega};q)) \geq \eps }
	\leq 1 \wedge \sqrt{2} \left[ 8 \srank(q_1) \cdot \econst^{-2(2q_2+1) \sqrt{\eps}} \right]^{\ell/2}
	\quad\text{for $\eps \in [0, 1]$.}
\end{equation}

\item	The expectation of the relative error satisfies
\begin{equation} \label{eqn:intro-expect-gapfree}
\Expect \mathrm{err}(\xi_{\max}(\mtx{A}; \mtx{\Omega}; q))
	\leq 1 \wedge \left[ \frac{2.70 \ell^{-1} + \log(8 \srank(q_1))}{2(2q_2+1)} \right]^2.
\end{equation}
\end{enumerate}
The symbol $\wedge$ denotes the minimum,
and the stable rank, $\srank(\cdot)$, is defined in~\eqref{eqn:srank}.
All logarithms are natural.
\end{theorem}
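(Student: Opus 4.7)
My plan is to adapt and extend the scalar analysis of Kuczyński--Woźniakowski to the block setting, combining explicit polynomial approximation with a Gaussian conditioning argument.

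\textbf{Reduction and setup.} Using the distributional identity~\eqref{eqn:input-diag}---a consequence of rotation invariance~\eqref{eqn:rotation-invar}---I assume $\mtx{A}=\diag(a_1,\dots,a_n)$; the affine covariance~\eqref{eqn:affine-invar} then normalizes $a_1=1$ and $a_n=0$, giving $\mathrm{err}(\xi_{\max})=1-\xi_{\max}$ and $\srank(\nu)=\sum_i a_i^{2\nu}$. For any $\phi\in\mathcal{P}_q$ with $\phi(1)=1$ and any $\vct{w}\in\R^\ell$, the polynomial representation~\eqref{eqn:krylov-poly} shows that $\vct{v}=\phi(\mtx{A})\mtx{\Omega}\vct{w}$ lies in $K_q$, so $\xi_{\max}\geq R(\vct{v})$. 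Writing $\vct{\omega}_i^*$ for the $i$th row of $\mtx{\Omega}$, I pick the data-dependent weight $\vct{w}=\vct{\omega}_1$ to align $\vct{v}$ with the top eigenvector direction. Keeping only the $i=1$ term in the denominator of the Rayleigh quotient and conditioning on $\vct{\omega}_1$ (so that $g_i\defeq\vct{\omega}_i^*\vct{\omega}_1/\enorm{\vct{\omega}_1}$ are i.i.d.\ standard normals for $i\geq 2$) yields the clean estimate
\begin{equation}
\mathrm{err}(\xi_{\max}) \;\leq\; \frac{1}{\enorm{\vct{\omega}_1}^2}\sum_{i\geq 2}(1-a_i)\phi(a_i)^2 g_i^2.
\end{equation}

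\textbf{Polynomial design.} Using the partition $q=q_1+q_2$, I would take $\phi(x)=x^{q_1}\psi(x)$ where $\psi$ has degree $q_2$ and $\psi(1)=1$, with $\psi$ chosen as a suitably rescaled extremal polynomial for which $(1-a)\psi(a)^2\leq 4\,\econst^{-2(2q_2+1)\sqrt{\eps}}$ on $[0,1-\eps]$ and $(1-a)\psi(a)^2\leq \eps$ on $[1-\eps,1]$. The key point is that the Rayleigh-quotient numerator involves $(1-a)\psi(a)^2$, an odd polynomial of degree $2q_2+1$ in $a$; after the substitution $y=\sqrt{1-a}$ this becomes the square of an odd polynomial of degree $2q_2+1$ in $y$, and a classical one-sided Chebyshev extremal estimate furnishes the sharp exponent $2q_2+1$ (rather than the naive $2q_2$ one would obtain by directly bounding $\psi$ on $[0,1-\eps]$). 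Using this bound together with the monomial factor $a_i^{2q_1}$, and summing via $\sum_i a_i^{2q_1}=\srank(q_1)$, the numerator is bounded by $4\,\econst^{-2(2q_2+1)\sqrt{\eps}}\sum_i a_i^{2q_1}g_i^2$ up to a lower-order $\eps$ contribution that is absorbed by the $1\wedge(\cdots)$ operation in~\eqref{eqn:intro-prob-gapfree}.

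\textbf{Tails and expectation.} The denominator $\enorm{\vct{\omega}_1}^2\sim\chi^2_\ell$ satisfies the sharp lower-tail bound $\Prob{\chi^2_\ell\leq t}\leq(\econst t/\ell)^{\ell/2}$, which produces the $(\,\cdot\,)^{\ell/2}$ structure of~\eqref{eqn:intro-prob-gapfree}. The conditionally independent weighted chi-squared numerator has expected total weight $\srank(q_1)$; a Markov bound on the $(\ell/2)$-th moment couples cleanly with the denominator tail to yield the factor $8\srank(q_1)$ and the prefactor $\sqrt{2}$. The expectation bound~\eqref{eqn:intro-expect-gapfree} follows by integrating the tail via $\Expect \mathrm{err}=\int_0^1\Prob{\mathrm{err}\geq\eps}\idiff{\eps}$ after the substitution $s=(2q_2+1)\sqrt{\eps}$; the tail bound crosses $1$ near $s\approx\tfrac{1}{2}\log(8\srank(q_1))+1.35/\ell$, and integrating past this transition produces the squared bracketed quantity.

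\textbf{Main obstacle.} The delicate step is the polynomial design: producing the sharp exponent $2q_2+1$ requires a careful one-sided Chebyshev argument that exploits the factorization $(1-a)\psi(a)^2$ and the vanishing at $a=1$, not just a direct Chebyshev estimate on $\psi$; the factor-of-two improvement in the exponent is precisely what distinguishes the Krylov bound from what a power-method analysis would give. A secondary challenge is selecting the Markov moment and optimizing the coupling so as to recover the clean constants $\sqrt{2}$ and $8$, which typically requires working with the explicit moment generating functions of the $\chi^2$ variables rather than coarser sub-exponential surrogates.
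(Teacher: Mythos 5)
The overall skeleton of your proposal does match the paper's: you take $\vct{x} = \mtx{\Omega}\vct{\omega}_1/\enorm{\vct{\omega}_1}$ (equivalently, $\vct{w}=\vct{\omega}_1$), exploit the conditional independence to reduce to a weighted chi-squared ratio, use a polynomial of the form $\phi(s)=s^{q_1}\psi(s)$ with $\psi$ a renormalized second-kind Chebyshev (your ``square of an odd polynomial of degree $2q_2+1$'' after substitution is exactly the $U_{2q_2}$ structure), and obtain the expectation bound by integrating the tail and splitting at the crossover. However, there is a genuine gap at the first reduction step that cannot be repaired by your stated fixes.

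You propose to keep only the $i=1$ term in the denominator of the Rayleigh quotient, obtaining $\mathrm{err}\leq \enorm{\vct{\omega}_1}^{-2}\sum_{i\geq 2}(1-a_i)\phi^2(a_i)g_i^2$. This discards exactly the cancellation the paper needs. The paper instead rearranges the full inequality $\sum_{i>1} X_i^2\phi^2(a_i)(1-a_i)\geq\eps X_1^2+\eps\sum_{i>1}X_i^2\phi^2(a_i)$ into $\sum_{i>1}X_i^2\phi^2(a_i)(1-\eps-a_i)\geq\eps X_1^2$, and the factor $(1-\eps-a_i)$ is \emph{nonpositive} for every $a_i\geq 1-\eps$, so those terms drop out entirely. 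Your version retains the factor $(1-a_i)$, which is merely small (at most $\eps$) on $[1-\eps,1]$, not nonpositive. You acknowledge this by bounding $(1-a)\psi^2(a)\leq\eps$ on $[1-\eps,1]$ and claiming the resulting contribution is ``absorbed by the $1\wedge$ operation,'' but that claim is false: for a matrix with many eigenvalues just below $1$ (say $n-2$ copies of $1-\eps/2$ alongside $a_1=1$ and $a_n=0$) the contribution from $[1-\eps,1]$ to your numerator has expectation on the order of $\eps\,\srank(q_1)$, while $\eps X_1^2$ has expectation $\eps\ell$. Whenever $\srank(q_1)\gg\ell$, your upper bound on $\Prob{\mathrm{err}\geq\eps}$ stays bounded away from zero no matter how large $q_2$ is, whereas the theorem (and the paper's argument) gives an exponentially small bound, and for this particular example the true probability is $0$ since the second-largest eigenvalue already exceeds $1-\eps$. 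So the dropped denominator sum is not a harmless simplification; it is precisely the mechanism that removes the near-top spectrum, and without it the stated bound does not follow.

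A secondary, more minor issue: in place of the paper's Laplace-transform (MGF) argument with the explicit choice $\theta^{-1}=4\sum_{i\in I}c_i$, which cleanly yields the $\sqrt{2}$ and the $(\cdot)^{-\ell/2}$ structure via the chi-squared moment generating functions, you propose to couple the $\chi^2_\ell$ lower-tail bound $\Prob{\chi^2_\ell\leq t}\leq(\econst t/\ell)^{\ell/2}$ with a Markov bound on $\Expect[N^{\ell/2}]$ for the weighted chi-squared numerator $N$. This can be made to work, but the fractional moment $\Expect[N^{\ell/2}]$ for a weighted sum of $\chi^2_1$ variables needs an explicit estimate (Jensen only helps for $\ell\leq 2$; for larger $\ell$ you need a power-mean or moment inequality), and it is not obvious that the resulting constants land on $\sqrt{2}$ and $8$; you would need to spell this out. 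The paper sidesteps the issue entirely by working with exponential moments, for which the product structure over independent coordinates is exact.

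To repair the argument: do not drop the $\sum_{i>1}X_i^2\phi^2(a_i)$ term in the denominator. Instead, bound $P_\eps\leq\Prob{-\eps X_1^2+\sum_{i>1}X_i^2\phi^2(a_i)(1-\eps-a_i)\geq 0}$, discard the terms with $a_i\geq 1-\eps$, and then proceed with your Chebyshev polynomial and a tail bound; then the rest of your outline goes through and aligns with the paper's proof.
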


\noindent
The proof of~\eqref{eqn:intro-prob-gapfree} appears in Section~\ref{sec:prob-bounds},
and the proof of~\eqref{eqn:intro-expect-gapfree} appears in Section~\ref{sec:expect-bound-gapfree}.
The experiments in Section~\ref{sec:numerics} support the analysis.

Let us take a moment to explain the content of this result.
We begin with a discussion about the role of the second depth parameter $q_2$,
and then we explain the role of the first depth parameter $q_1$.
We emphasize that the user does not choose a partition $q = q_1 + q_2$;
the bounds are valid for all partitions.

For now, fix $q_1$.  The key message of Theorem~\ref{thm:gapfree}
is that the relative error satisfies %
$$
\Expect \mathrm{err}(\xi_{\max}(\mtx{A}; \mtx{\Omega};q_1 + q_2)) \leq \eps
$$
once the depth parameter $q_2$ exceeds 
$$
q_{2}(\eps) := - \frac{1}{2} + \frac{2.70 \ell^{-1} + \log( 8 \srank(q_1) )}{4\sqrt{\eps}}.
$$
Once the depth $q_2$ attains this level, the probability of error drops off exponentially fast:
$$
q_2 = q_{2}(\eps) + k \eps^{-1/2}
	\quad\text{implies}\quad
	\Prob{\mathrm{err}(\xi_{\max}(\mtx{A}; \mtx{\Omega}; q_1 + q_2)) \geq \eps} \leq \econst^{-2k\ell}.
$$
In fact, we need $q_2 \geq q_2(1)$ just to ensure that the probability bound is nontrivial.

The most important aspect of this result is that the depth $q_{2}(\eps)$ scales
with $\eps^{-1/2}$, so it is possible to achieve moderate relative error
using a block Krylov space with limited depth.  In contrast, the randomized power
method~\cite{KW92:Estimating-Largest} requires the depth $q$ to be proportional to $\eps^{-1}$
to achieve a relative error of $\eps$.

The second thing to notice is that the depth $q_{2}(\eps)$ scales with $\log(\srank(q_1))$.
The stable rank is never larger than the ambient dimension $n$, but
it can be significantly smaller---even constant---when the spectrum
of the matrix has polynomial decay.

Here is another way to look at these facts.
As we increase the depth parameter $q$,
the block Krylov method exhibits a burn-in period whose length $q_1 + q_2(1)$ depends on
$\srank(q_1)$. %
While the depth $q_2 \leq q_2(1)$,
the algorithm does not make much progress in estimating the maximum eigenvalue.
Once the depth satisfies $q_2 \geq q_2(1)$, the expected
relative error decreases in proportion to $q_2^{-2}$.  In contrast, the power method~\cite{KW92:Estimating-Largest}
reduces the expected relative error in proportion to $q_2^{-1}$.

We can now appreciate the role of the first depth parameter $q_1$.
When the spectrum of the input matrix exhibits polynomial decay, $\srank(q_1)$ is \emph{constant}
for an appropriate value of $q_1$ that depends on the decay rate.
In this case, the analysis shows that the total burn-in period $q_1 + q_2(1)$
is just $\mathcal{O}(1)$ steps.  For example, when the $j$th eigenvalue decays like $1/j$,
this situation occurs. %

The block size $\ell$ may not play a significant role in determining
the average error.  But changing the block size has a large effect
on the probability of failure (i.e., the event that the relative error exceeds $\eps$).
For example, suppose that we increase the block size $\ell$ from one to three.
For each increment of $\eps^{-1/2}$ in the depth $q_2$,
the failure probability with block size $\ell = 3$ is a factor of $54\times$
smaller than the failure probability with block size $\ell = 1$!

\begin{remark}[Prior Work] \label{rem:prior-gapfree}
The simple case $\ell = 1$ in Theorem~\ref{thm:gapfree}
has been studied in the paper~\cite{KW92:Estimating-Largest}.
Our work introduces two major innovations.  First, we obtain
bounds in terms of the stable rank, which allows us
to mitigate the dimensional dependence that appears
in~\cite{KW92:Estimating-Largest}.  Second, we have
obtained precise results for larger block sizes $\ell$,
which indicate potential benefits of using block Krylov
methods.  Our proof strategy is motivated by the work
in~\cite{KW92:Estimating-Largest}, but we have been
able to streamline and extend their arguments by using
a more transparent random model for the test matrix.
\end{remark}

\subsection{Matrices with a Spectral Gap}
\label{sec:gap}

Our second result gives probabilistic bounds for the maximum eigenvalue estimate
$\xi_{\max}(\mtx{A}; \mtx{\Omega};q)$ when we have a lower bound for the spectral gap
$\gamma$ of the input matrix.

\begin{theorem}[Randomized Block Krylov: Maximum Eigenvalue Estimate with Spectral Gap] \label{thm:gap}
Instate the following hypotheses.

\begin{itemize}
\item	Let $\mtx{A} \in \R^{n \times n}$ be a symmetric input matrix with spectral gap $\gamma$, defined in~\eqref{eqn:spectral-gap}.

\item	Draw a standard normal test matrix $\mtx{\Omega} \in \R^{n \times \ell}$ with block size $\ell$.

\item	Fix the depth parameter $q \geq 0$, and let $q = q_1 + q_2$ be an arbitrary nonnegative integer partition.
\end{itemize}

\noindent
We have the following probability bounds for the estimate
$\xi_{\max}(\mtx{A}; \mtx{\Omega}; q)$, defined in~\eqref{eqn:eigenvalue-estimate},
of the maximum eigenvalue of the input matrix.

\begin{enumerate}
\item	The relative error~\eqref{eqn:error-def} in the eigenvalue estimate satisfies the probability bound
\begin{equation}  \label{eqn:intro-prob-gap}
\Prob{ \mathrm{err}(\xi_{\max}(\mtx{A}; \mtx{\Omega}; q)) \geq \eps }
	\leq 1 \wedge \sqrt{2} \left[ \frac{8\srank(q_1)}{\eps} \cdot \econst^{-4q_2 \sqrt{\gamma}} \right]^{\ell/2}
	\quad\text{for $\eps \in (0,1]$.} %
\end{equation}

\item	Abbreviate $F := 4 \srank(q_1) \econst^{-4q_2 \sqrt{\gamma}}$.
The expectation of the relative error satisfies %
\begin{align} %
\Expect \mathrm{err}(\xi_{\max}(\mtx{A}; \mtx{\Omega}))
	&\leq \frac{F}{(\ell - 2) + F} %
	&& (\ell \geq 3); \label{eqn:gap-l>3} \\
\Expect \mathrm{err}(\xi_{\max}(\mtx{A}; \mtx{\Omega}))
	&\leq (F/2) \log(1 + 2/F)
	&& (\ell = 2); \label{eqn:gap-l=2} \\
\Expect \mathrm{err}(\xi_{\max}(\mtx{A}; \mtx{\Omega}))
	&\leq 1 \wedge \sqrt{2 \pi F}  %
	&& (\ell = 1). \label{eqn:gap-l=1}
\end{align}
\end{enumerate}
The symbol $\wedge$ denotes the minimum,
and the stable rank is defined in~\eqref{eqn:srank}.
\end{theorem}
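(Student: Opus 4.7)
The plan is to build directly on the proof of Theorem~\ref{thm:gapfree}, substituting a Chebyshev polynomial tailored to the spectral gap for the generic polynomial used there. First I would invoke the invariance properties~\eqref{eqn:input-diag} and~\eqref{eqn:affine-invar} to reduce to the case $\mtx{A} = \diag(a_1, \dots, a_n)$ with $a_{\max} = 1$ and $a_{\min} = 0$; then $a_{m+1} = 1 - \gamma$ by the definition of the gap, and $\mathrm{err}(\xi_{\max}) = 1 - \xi_{\max}$.

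Next I would use the polynomial representation~\eqref{eqn:krylov-poly} of the Krylov subspace. For any $\varphi \in \mathcal{P}_q$ and weight vector $\vct{w} \in \R^{\ell}$, plugging $\vct{v} = \varphi(\mtx{A})\mtx{\Omega}\vct{w}$ into the Rayleigh quotient yields the deterministic upper bound
\begin{equation}
\mathrm{err}(\xi_{\max}) \leq \frac{\sum_{i=1}^n (1 - a_i)\,\varphi(a_i)^2\, r_i^2}{\sum_{i=1}^n \varphi(a_i)^2\, r_i^2},
\qquad r_i := (\mtx{\Omega}\vct{w})_i.
\end{equation}
I would factor $\varphi = \psi \cdot p$, where $\psi \in \mathcal{P}_{q_1}$ handles the stable-rank contribution exactly as in the gap-free analysis (the natural choice is $\psi(\lambda) = \lambda^{q_1}$, so that $\sum_i \psi(a_i)^2$ equals $\srank(q_1)$ in our coordinates), and $p \in \mathcal{P}_{q_2}$ is the Chebyshev polynomial on $[0, 1 - \gamma]$ normalized to $p(1) = 1$. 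Using $T_n(\cosh u) = \cosh(nu)$ together with the standard bound $(1 - \sqrt{\gamma})/(1 + \sqrt{\gamma}) \leq \econst^{-2\sqrt{\gamma}}$ gives $|p(a_i)| \leq 2\,\econst^{-2 q_2 \sqrt{\gamma}}$ whenever $a_i \leq 1 - \gamma$; squaring produces the $\econst^{-4 q_2 \sqrt{\gamma}}$ decay that appears in~$F$.

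Third, restricting the denominator sum to the top eigenspace $\{i : a_i = 1\}$ (where $p \equiv 1$) and the numerator sum to its complement turns the bound into a ratio of two quadratic forms in the Gaussian matrix $\mtx{\Omega}$. After optimizing over $\vct{w}$, the denominator is at least $\smin^2$ of the corresponding Gaussian block, and the numerator is at most $4\,\econst^{-4q_2\sqrt{\gamma}}\cdot\srank(q_1)$ times the operator norm squared of a complementary Gaussian block. Standard Wishart upper- and lower-tail bounds---applied exactly as in the proof of~\eqref{eqn:intro-prob-gapfree}---yield the probability bound~\eqref{eqn:intro-prob-gap}. The only substantive change from the gap-free argument is that the polynomial factor contributes an exponential gap term rather than a $q_2^{-2}$ decay.

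Finally, the expectation bounds follow by integrating the probability bound, and the three regimes simply reflect the finiteness of negative moments of a $\chi^2_\ell$ random variable. The error can be written as $1 \wedge FX/Y$ with $Y$ essentially $\chi^2_\ell$ and $X$ bounded; integrating the truncated tail against the $\chi^2_\ell$ density produces $F/(\ell - 2 + F)$ when $\ell \geq 3$ (using $\Expect[1/Y] = 1/(\ell-2)$), the logarithmic expression $(F/2)\log(1 + 2/F)$ when $\ell = 2$ (where $\Expect[1/Y]$ just fails to be finite and one integrates $F/y$ explicitly over its support), and the $\sqrt{2 \pi F}$ bound when $\ell = 1$ (handled by a square-root comparison against the half-normal density and Jensen's inequality). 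The main obstacle is step three: carefully pairing the Chebyshev constants with the Wishart small-ball estimates so that the final constants $8$ and $4$ land exactly as in the statement. This is essentially an accounting exercise paralleling the gap-free proof, but it is where the delicate interaction between the block size $\ell$ and the Gaussian geometry really has to be tracked.
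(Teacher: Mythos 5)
Your polynomial construction is exactly the paper's: $\varphi = s^{q_1} T_{q_2}((2/\beta)s - 1)/T_{q_2}((2/\beta)-1)$ with $\beta = 1-\gamma$, giving $\varphi^2(a_i) \leq 4 a_i^{2q_1} \econst^{-4q_2\sqrt{\gamma}}$ on the bulk and hence the factor $4\srank(q_1)\econst^{-4q_2\sqrt{\gamma}}$ after summing. Where you diverge is the probabilistic step. The paper does \emph{not} optimize over $\vct{w}$ and it does \emph{not} use Wishart tail bounds (neither here nor in the gap-free probability bound, which you assert uses them); instead it commits to the single choice $\vct{x} = \mtx{\Omega}\vct{\omega}_1/\norm{\vct{\omega}_1}$, where $\vct{\omega}_1^*$ is the first row of $\mtx{\Omega}$. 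Conditioning on $\vct{\omega}_1$ and using row-independence plus rotational invariance, the coordinates of $\vct{x}$ become \emph{independent} with $X_1 \sim \textsc{chi}(\ell)$ and $X_i \sim \normal(0,1)$ for $i > 1$. The error bound then reads $c/(X_1^2 + c)$ with $c = \sum_{a_i\leq\beta} X_i^2\varphi^2(a_i)$, a one-dimensional quantity in independent scalars; the probability bound follows from a Laplace-transform argument on this sum, which produces the clean $\sqrt{2}[\,\cdot\,]^{\ell/2}$ multiplicative form. Your proposal --- optimize the Rayleigh quotient over $\vct{w}$, then bound $\sigma_{\min}$ of a Gaussian block from below and a complementary operator norm from above --- is the randomized-SVD (HMT) route; it might be made to work, but it is a genuinely harder generalized-eigenvalue problem in $\ell$ dimensions, and you would not recover the paper's constants without substantial extra work. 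You flag this as the "main obstacle," and that is exactly where the argument would bog down.

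The expectation bounds are also obtained by a different mechanism than you describe. The paper explicitly does \emph{not} integrate the probability bound here (contrast Theorem~\ref{thm:gapfree}, where it does); it conditions on $\{X_i\}_{i>1}$ and computes $\Expect_{X_1}\bigl[ c/(X_1^2+c) \bigr]$ directly, then pushes the outer expectation inside by Jensen (the map $c \mapsto c/(X_1^2+c)$ is concave and increasing). For $\ell \geq 3$ the needed estimate is Hopf's bound $\Expect\bigl[1/(X_1^2 + c)\bigr] \leq 1/((\ell-2)+c)$, which is strictly sharper than the $c=0$ case $\Expect[1/X_1^2] = 1/(\ell-2)$ that you cite; using only the latter would not produce $F/((\ell-2)+F)$. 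Similarly, your characterization of the error as $1 \wedge F X/Y$ is looser than the actual $c/(X_1^2 + c)$, and would lose a constant. Your intuition that the three regimes track the integrability of $1/Y$, $\log$, and $1/\sqrt{Y}$ against the $\chi^2_\ell$ density is correct in spirit, but the clean constants in~\eqref{eqn:gap-l>3}--\eqref{eqn:gap-l=1} require the exponential-integral bounds $\Expect[1/(X_1^2+c)] \leq \tfrac12\log(1+2/c)$ ($\ell=2$) and $\Expect[1/(X_1^2+c)] \leq \sqrt{2\pi/c}$ ($\ell=1$), followed by Jensen --- none of which falls out of a truncated-tail integration against the raw density without the "$+c$" shift.
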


\noindent
The proof of~\eqref{eqn:intro-prob-gap} appears in Sections~\ref{sec:prob-bounds};
the proof of~\eqref{eqn:gap-l>3}, \eqref{eqn:gap-l=2}, and \eqref{eqn:gap-l=1}
appears in Section~\ref{sec:expect-bound-gap}.
The experiments in Section~\ref{sec:numerics} bear out these predictions.

Let us give a verbal summary of what this result means.
First of all, we anticipate that Theorem~\ref{thm:gap}
will give better bounds than Theorem~\ref{thm:gapfree}
when the spectral gap $\gamma$ exceeds the target level $\eps$ for the error.
But both results are valid for all choices of $\gamma$ and $\eps$.

Now, fix the first depth parameter $q_1$.
One implication of the spectral gap result is that the relative error satisfies
$$
\Prob{ \mathrm{err}(\xi_{\max}(\mtx{A}; \mtx{\Omega};q)) \leq \eps } \approx 1
$$
when the second depth parameter $q_2$ exceeds
$$
q_{2}(\eps; \gamma) := \frac{  0.70 \ell^{-1} + \log(8 \eps^{-1} \srank(q_1)) }{ 4 \sqrt{\gamma} }.
$$
In this case, the depth $q_2$ only scales with $\log(1/\eps)$, so the block Krylov method
can achieve very small relative error for a matrix with a spectral gap.
As before, the depth $q_2$ also scales with
$\log(\srank(q_1))$, so the dimensional dependence is weak---or even nonexistent
if the spectrum has polynomial decay and $q_1$ is sufficiently large.

When the depth $q_2 \geq q_{2}(\eps; \gamma)$, the error probability drops off quickly:
$$
q_2 = q_2(\eps; \gamma) + k \gamma^{-1/2}
\quad\text{implies}\quad
\Prob{ \mathrm{err}(\xi_{\max}(\mtx{A}; \mtx{\Omega})) \geq \eps }
	\leq \econst^{-2k \ell}.
$$
This bound indicates that $\gamma^{-1/2}$ is the scale on which the depth $q_2$
needs to increase to reduce the failure probability by a constant multiple
(which depends on the block size).

We discover a new phenomenon when we examine the expectation of the error.
On average, to achieve a relative error of $\eps$, it is sufficient that
the depth $q_2 \geq q_2'(\eps; \gamma)$, where
$$
\begin{aligned}
\ell \geq 3 &:
\quad q_2'(\eps;\gamma) := \frac{\log(4 \srank(q_1)) + \log(1/\eps) - \log(\ell - 2)}{4 \sqrt{\gamma}}; \\
\ell = 2 &:
\quad q_2'(\eps;\gamma) := \frac{\log(4 \srank(q_1)) + \log(1/\eps) + \log\log(1/\eps)}{4 \sqrt{\gamma}}
	& \text{for all $\eps \leq \mathrm{const}$;} \\
\ell = 1 &:
\quad q_2'(\eps;\gamma) := \frac{\log(4 \srank(q_1)) + 2 \log(1/\eps) + \log(2\pi)}{4 \sqrt{\gamma}}.
\end{aligned}
$$
In other words, the depth $q_2$ of the block Krylov space needs to be about
$\log(4 \srank(q_1)) / (4 \sqrt{\gamma})$ before we obtain an average
relative error less than one; we can reduce this requirement slightly
by increasing the block size $\ell$.  But once the depth $q_2$ reaches
this level, Theorem~\ref{thm:gap} suggests that the block Krylov method
with $\ell \geq 2$ reduces the average error \emph{twice as fast}
as the block Krylov method with $\ell = 1$.

\begin{remark}[Prior Work]
The simple case $\ell = 1$ in Theorem~\ref{thm:gap}
has been studied in the paper~\cite{KW92:Estimating-Largest}.
See Remark~\ref{rem:prior-gapfree} for a discussion of
the improvements we have achieved.
\end{remark}

\subsection{Estimating Minimum Eigenvalues}

We can also use Krylov subspace methods to obtain an estimate $\xi_{\min}(\mtx{A}; \mtx{B}; q)$
for the \emph{minimum} eigenvalue of a symmetric matrix $\mtx{A}$.  Conceptually, the simplest
way to accomplish this task is to apply the Krylov subspace method to the negation $- \mtx{A}$.
The minimum eigenvalue estimate takes the form
$$
\xi_{\min}(\mtx{A}; \mtx{B}; q) := - \xi_{\max}(-\mtx{A}; \mtx{B}; q).
$$
Owing to~\eqref{eqn:underestimate}, this estimate is never smaller than $\lambda_{\min}(\mtx{A})$.

It is straightforward to adapt Theorems~\ref{thm:gapfree} to obtain
bounds for the minimum eigenvalue estimate with a random test matrix $\mtx{\Omega}$.
In particular, we always have the bound
$$
\Expect \left[ \frac{\xi_{\min}(\mtx{A}; \mtx{\Omega}; q) - a_{\min}}{a_{\max} - a_{\min}} \right]
	\leq \left[ \frac{2.70 \ell^{-1} + \log(8 \srank(-\mtx{A}; q_1))}{2(2q_2+1)} \right]^2. %
$$
In this context, the stable rank takes the form
$$
\srank(-\mtx{A}; \nu) = \sum\limits_{i = 1}^n \left(\frac{a_{\max} - a_i}{a_{\max} - a_{\min}} \right)^{2\nu}.
$$
See Section~\ref{sec:gapfree} for discussion of this type of bound.

We can also use Theorem~\ref{thm:gap} to obtain results in terms of the spectral gap.
The spectral gap $\gamma(-\mtx{A})$ is the magnitude of the difference between the
smallest two eigenvalues of $\mtx{A}$, relative to the spectral range.
For example, when the block size $\ell = 3$, we have the bound
$$
\ell = 3 : \quad\quad
\Expect \left[ \frac{\xi_{\min}(\mtx{A}; \mtx{\Omega};q) - a_{\min}}{a_{\max} - a_{\min}} \right]
	\leq 4 \srank(-\mtx{A}; q_1) \cdot \econst^{-4 q_2 \sqrt{\gamma(-\mtx{A})}}. %
$$
See Section~\ref{sec:gap} for discussion of this type of bound.

\begin{remark}[Inverse Iterations]
Suppose that we have a routine for applying the \emph{inverse}
of a positive-semidefinite matrix $\mtx{A}$ to a vector.
In this case, we can apply the Krylov method to the inverted
matrix $\mtx{A}^{-1}$ to estimate the minimum eigenvalue.
This approach is often more powerful than applying
the Krylov method to $-\mtx{A}$.
For example, inverse iteration is commonly used
for discretized elliptic operators; the discussion
in Section~\ref{sec:laplacian} supports this approach.
\end{remark}

\subsection{Estimating Singular Values}

We now arrive at the problem of estimating the spectral norm of a general matrix
$\mtx{C} \in \R^{n \times m}$ using Krylov subspace methods.  Assuming that $n \leq m$,
we can apply the block Krylov method to the square $\mtx{CC}^*$.
This yields an estimate $\xi_{\max}(\mtx{CC}^*; \mtx{B}; q)$ for the \emph{square}
of the spectral norm of $\mtx{C}$.  If $n > m$, we can just as well work with the
other square $\mtx{C}^* \mtx{C}$.

Theorems~\ref{thm:gapfree} and~\ref{thm:gap} immediately yield
error bounds for the random test matrix $\mtx{\Omega}$.
In particular, we always have the bound
$$
\Expect \left[ \frac{\norm{\mtx{C}}^2 - \xi_{\max}(\mtx{CC}^*; \mtx{\Omega};q)}{\norm{\mtx{C}}^2} \right]
	\leq \left[ \frac{2.70 \ell^{-1} + \log(8 \srank(\mtx{CC}^*;q_1))}{2(2q_2+1)} \right]^2. %
$$
We also have a bound in terms of the spectral gap $\gamma(\mtx{CC}^*)$, which is
the difference between the squares of the largest two distinct singular values,
relative to the spectral range.
For block size $\ell = 3$, we have
$$
\Expect \left[ \frac{\norm{\mtx{C}}^2 - \xi_{\max}(\mtx{CC}^*; \mtx{\Omega})}{\norm{\mtx{C}}^2} \right]
	\leq 4 \srank(\mtx{CC}^*;q_1) \cdot \econst^{-4 q_2 \sqrt{\gamma(\mtx{CC}^*)}}.
$$
In this case, it is natural to bound the stable rank as
$$
\srank(\mtx{CC}^*; 0) = m \wedge n
\quad\text{and}\quad
\srank(\mtx{CC}^*; \nu) \leq \left(\frac{\norm{\mtx{C}}_{4\nu}}{\norm{\mtx{C}}}\right)^{4\nu}
\quad\text{for $\nu \geq 1$.}
$$
We have written $\norm{\cdot}$ for the spectral norm and $\norm{\cdot}_{p}$ for the Schatten $p$-norm
for $p \geq 1$.

\begin{remark}[Other Approaches]
It is also possible to work with ``odd'' Krylov subspaces
$K_q(\mtx{CC}^*; \mtx{CB})$ or $K_q(\mtx{C}^*\mtx{C}; \mtx{C}^* \mtx{B})$,
but the analysis requires some modifications.
\end{remark}

\begin{remark}[Minimum Singular Values]
The quantity $\xi_{\min}(\mtx{CC}^*; \mtx{B})$ gives an estimate for
the $m$th largest squared singular value of $\mtx{C}$.  When $m \leq n$,
this is the smallest singular value, which may be zero.  It is straightforward to
derive results for the estimate using the principles outlined above.
We omit the details.
\end{remark}

\subsection{Extensions}

With minor modifications, the analysis in this paper can be extended
to cover some related situations.  First, when the maximum eigenvalue has multiplicity greater
than one, the (block) Krylov method converges more quickly.
Second, we can extend the algorithm and the analysis
to the problem of estimating the largest eigenvalue of an Hermitian
matrix (using a complex standard normal test matrix).  Third,
we can analyze the behavior of the randomized (block) power method.
For brevity, we omit the details.

\section{Numerical Experiments}
\label{sec:numerics}

Krylov subspace methods exhibit complicated behavior because
they are implicitly solving a polynomial optimization problem.
Therefore, we do not expect a reductive theoretical analysis to
capture all the nuances of their behavior.  Nevertheless,
by carefully choosing input matrices, we can witness
the phenomena that the theoretical analysis predicts.

\subsection{Experimental Setup}
\label{sec:numerics-setup}

We implemented the randomized block Krylov method in \textsc{Matlab} 2019a.
The code uses full orthogonalization, as described in Algorithm~\ref{alg:maxeig},
and the maximum eigenvalue of the Rayleigh matrix is computed
with the command \texttt{eig}.

We consider several types of input matrices for which accurate
maximum eigenvalue estimates are particularly difficult.
The randomized block Krylov method is rotationally invariant and affine invariant,
so there is no (theoretical) loss in working with a diagonal matrix.
Of course, the Krylov method does not exploit the fact that the
input matrix is diagonal.

Recall that an $n \times n$ matrix
$\mtx{W} \in \R^{n \times n}$ from the Gaussian Orthogonal Ensemble (GOE)
is obtained by extracting the symmetric part of a Gaussian matrix:
$$
\mtx{W} = \half (\mtx{G} + \mtx{G}^*) \in \R^{n \times n}
\quad\text{where $\mtx{G} \in \R^{n \times n}$ has independent standard normal entries.}
$$
The large eigenvalues of a GOE matrix cluster together,
and the spectral gap becomes increasingly small as the
dimension increases.  The spectrum has essentially no decay.

\begin{itemize}
\item	To study the behavior of the Krylov method
\textit{without} a spectral gap, we draw a realization of a GOE matrix,
diagonalize it using \texttt{eig}, and make an affine transformation
so that its extreme eigenvalues are $0$ and $1$.  Abusing terminology,
we also refer to this diagonal input matrix as a GOE matrix.

\item	To study the behavior of the Krylov method
\textit{with} a spectral gap $\gamma$, we take the diagonalized GOE matrix
and increase the largest eigenvalue until the gap is $\gamma$.
We refer to this model as a \textit{gapped GOE matrix}.

\item	To understand how tail decay affects the convergence of the
Krylov method, we consider \textit{gapped power law} matrices.
For a dimension $n$, power $p > 0$, and gap $\gamma \in [0, 1)$,
the matrix is diagonal with entries
$$
a_1 = 1 + \frac{\gamma}{1-\gamma}
\quad\text{and}\quad
a_i = (i-1)^{-1/p}
\quad\text{for $i = 2, \dots, n$.}
$$
\end{itemize}

See Figure~\ref{fig:srank} for a depiction of the stable rank function
for these types of input matrices.
For the random matrix models, we draw a single realization of the input
matrix and then fix it once and for all.  The variability in the experiments
derives from the random draw of the test matrix.

\begin{figure}
\begin{subfigure}{0.49\textwidth}
\includegraphics[width=\columnwidth]{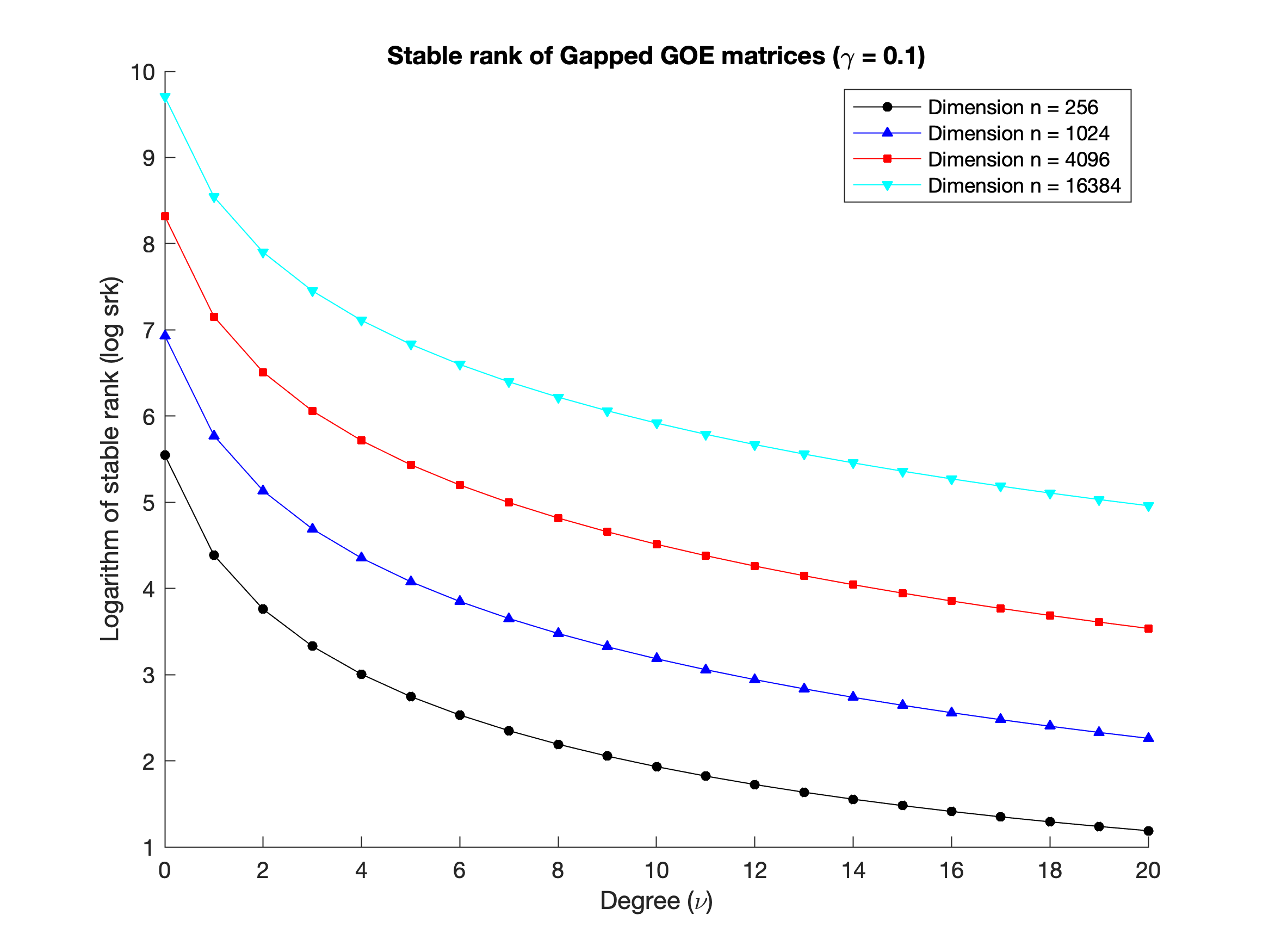}
\caption{Gapped GOE matrices} \label{fig:goe-srk}
\end{subfigure}
\begin{subfigure}{0.49\textwidth}
\includegraphics[width=\columnwidth]{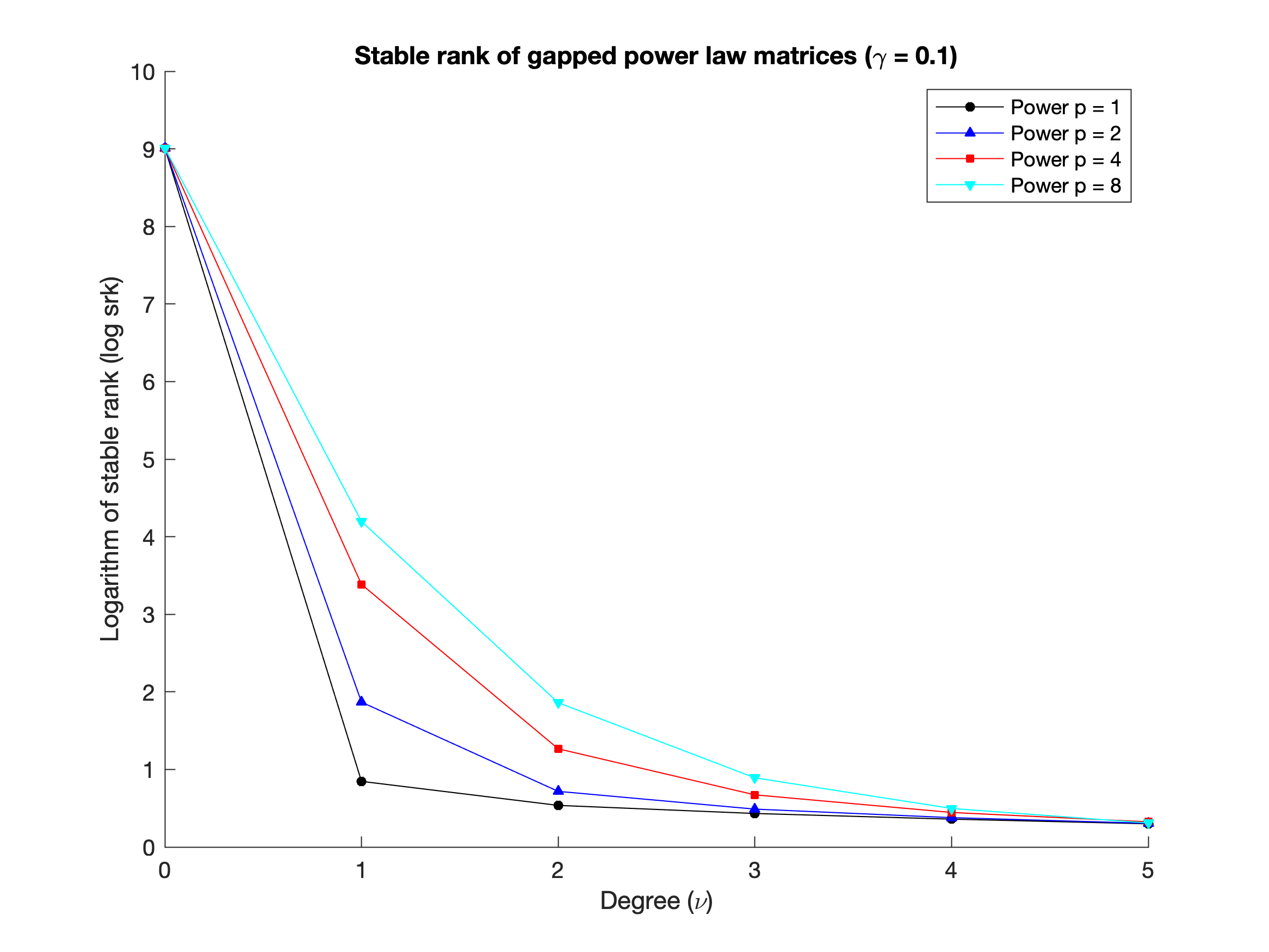}
\caption{Gapped power law matrices ($n = 8192$)} \label{fig:pow-srk}
\end{subfigure} \hspace{3pc}
\caption{\textbf{Logarithm of stable rank.}  These graphs display the natural logarithm
of the stable rank function, $\srank(\nu)$, for two types of matrices.  For gapped GOE matrices,
the data series illustrate the effect of increasing the dimension ($n$).
For gapped power law matrices, the series illustrate the effect
of decreasing the rate of tail decay ($p$).
} \label{fig:srank}
\end{figure}

\subsection{Sample Paths: The Role of Block Size}
\label{sec:numerics-paths}

First, we explore the empirical probability
that the randomized block Krylov method can
achieve a sufficiently small error.

The first set of experiments focuses on the behavior
of the block Krylov method for a matrix
with a substantial spectral gap.  
For a fixed $1000 \times 1000$ gapped GOE matrix with $\gamma = 0.1$,
Figure~\ref{fig:numerics-gap} illustrates
sample paths of the relative error in estimating
the maximum eigenvalue as a function
of the total depth $q$ of the Krylov space.
We compare the performance as the block size $\ell$
varies.  Here is a summary of our observations:

\begin{figure}
\begin{center}
\begin{subfigure}{0.5\textwidth}
\includegraphics[width=\columnwidth]{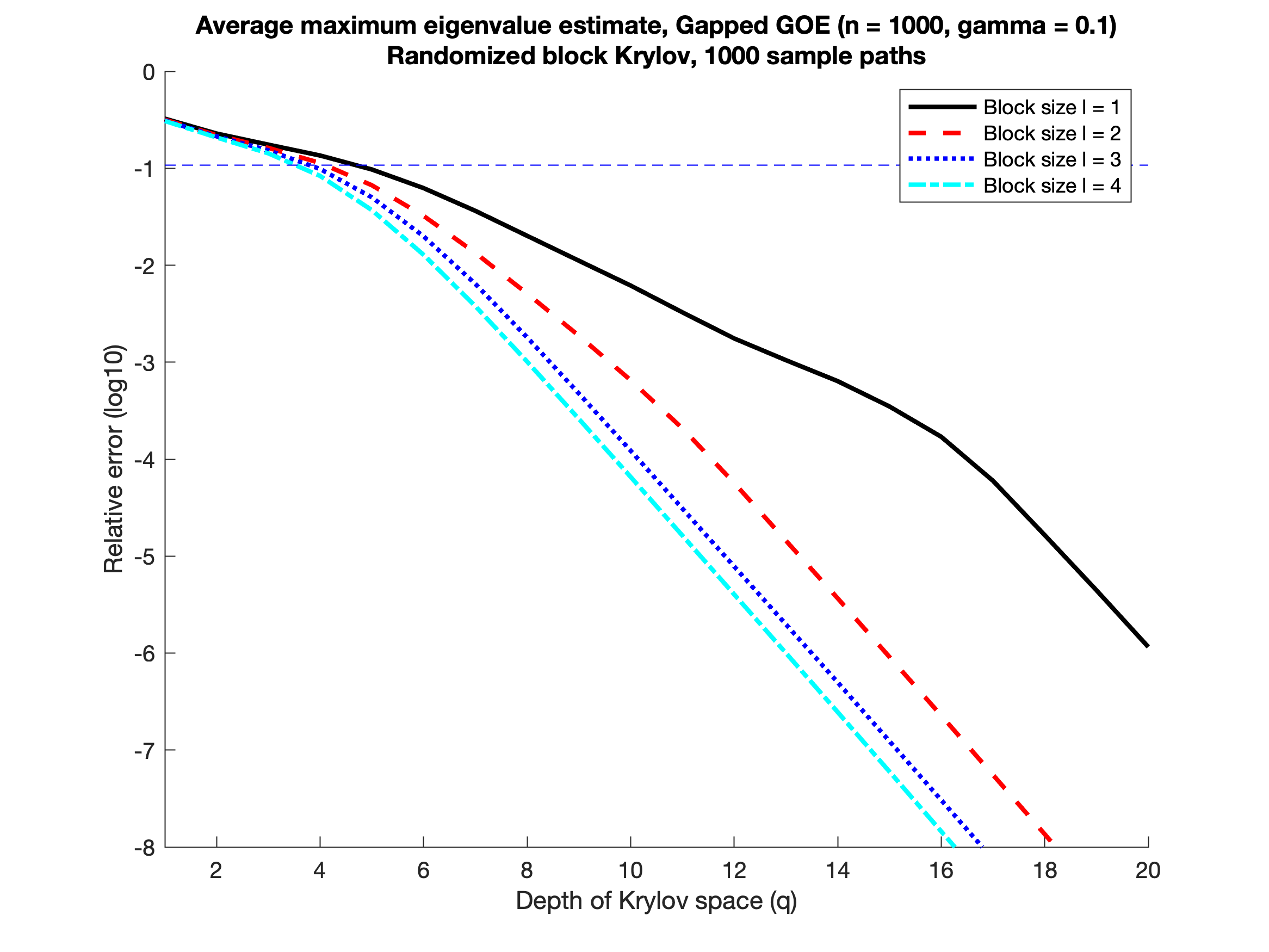}
\caption{Average error across block size}
\end{subfigure} \\ \vspace{1pc}
\begin{subfigure}{0.45\textwidth}
\includegraphics[width=\columnwidth]{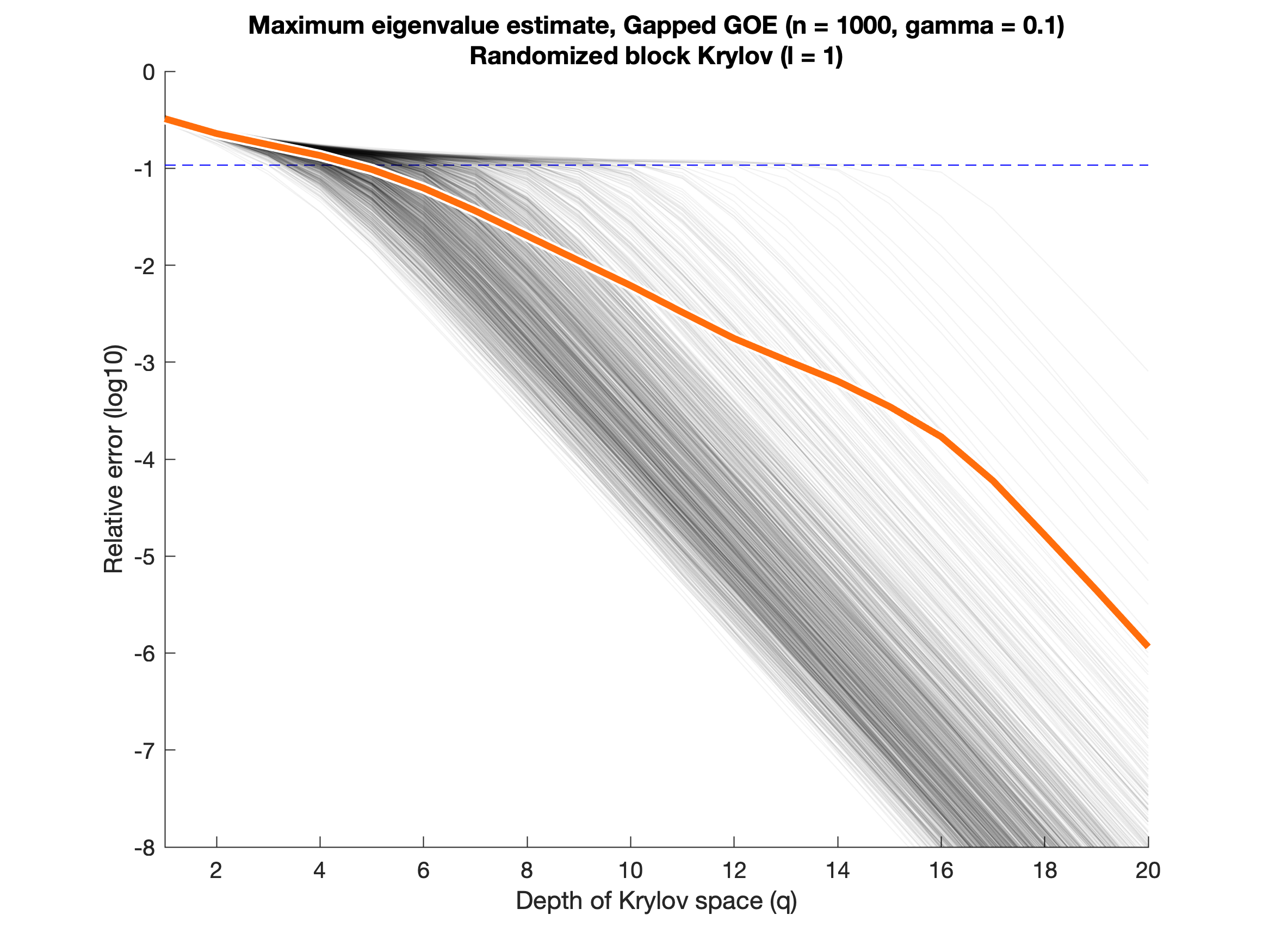}
\caption{Block size $\ell = 1$}
\end{subfigure}
\begin{subfigure}{0.45\textwidth}
\includegraphics[width=\columnwidth]{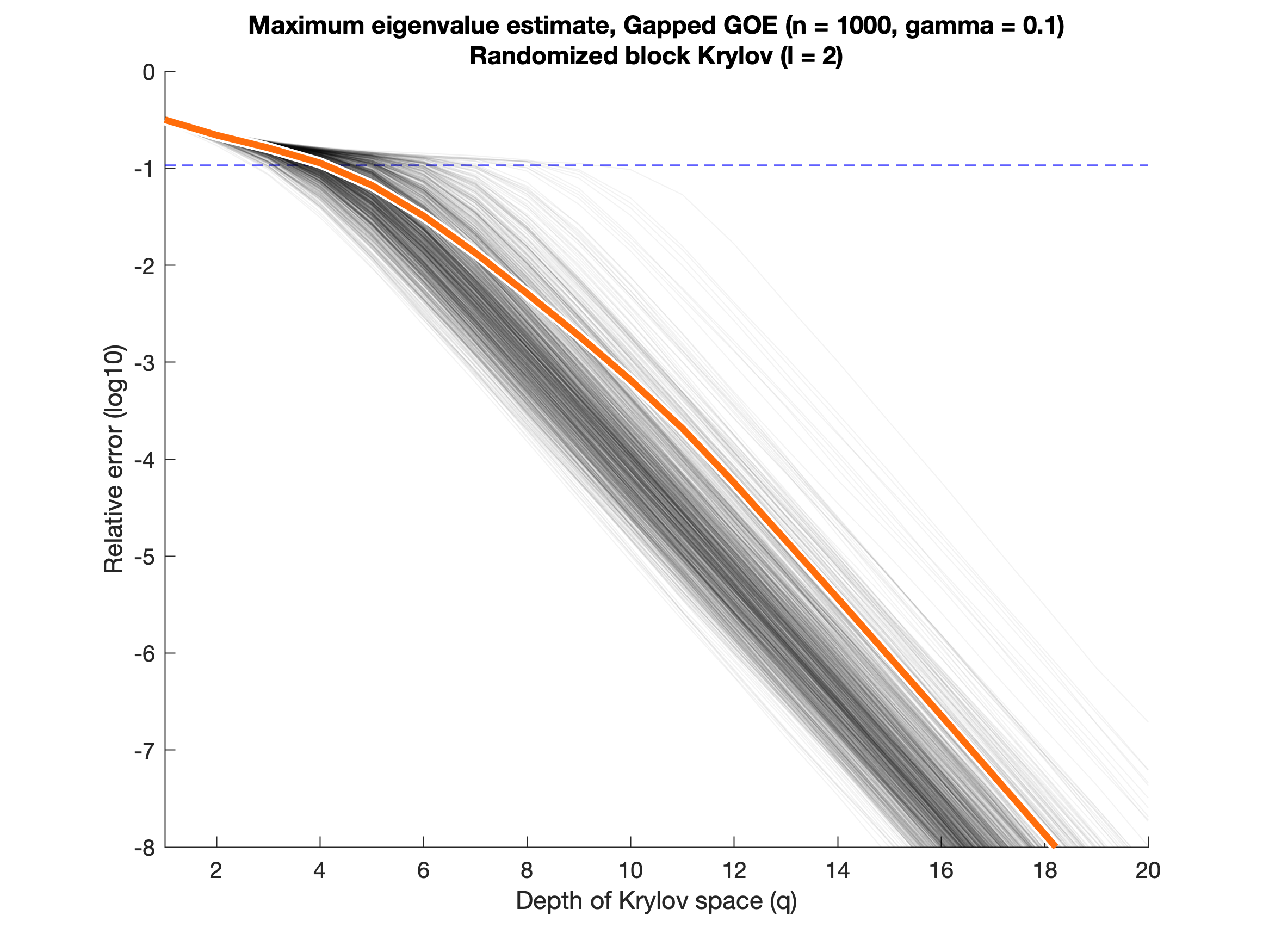}
\caption{Block size $\ell = 2$}
\end{subfigure} \\ \vspace{1pc}
\begin{subfigure}{0.45\textwidth}
\includegraphics[width=\columnwidth]{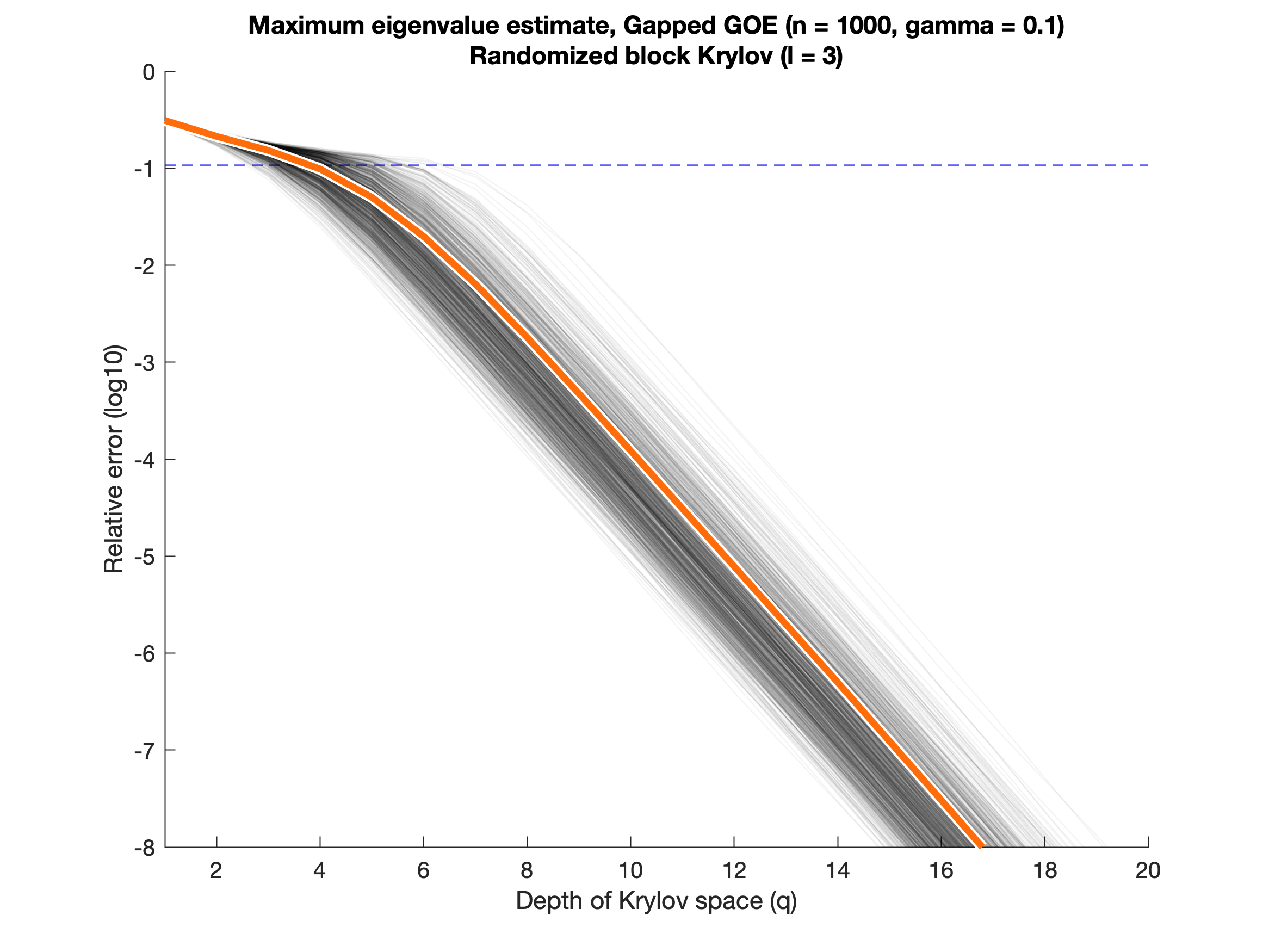}
\caption{Block size $\ell = 3$}
\end{subfigure}
\begin{subfigure}{0.45\textwidth}
\includegraphics[width=\columnwidth]{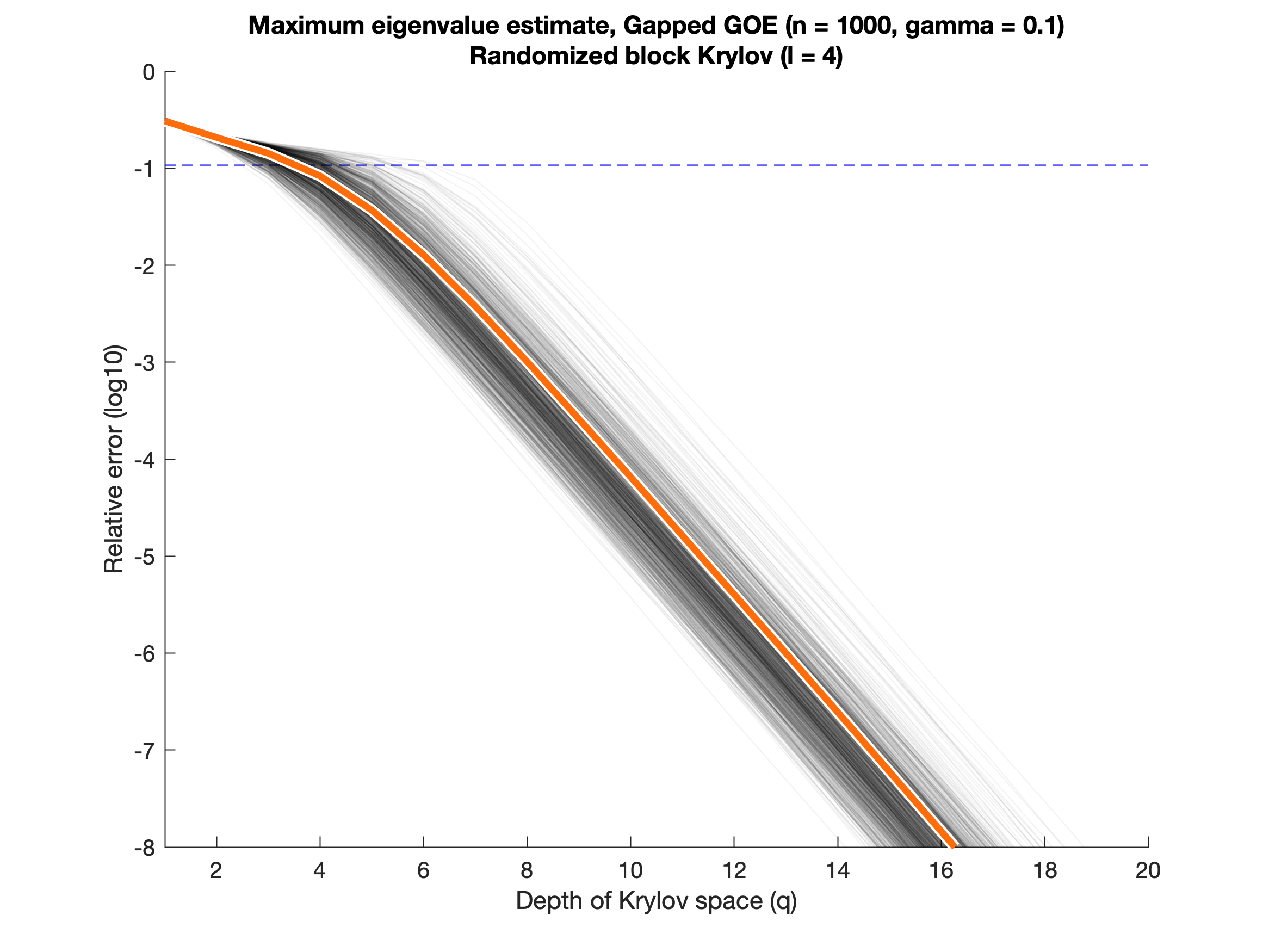}
\caption{Block size $\ell = 4$}
\end{subfigure}
\caption{\textbf{Sample paths, with spectral gap.}
The input matrix is a fixed $1000 \times 1000$ GOE matrix with an artificial
spectral gap $\gamma = 0.1$ (dashed blue line).  The bottom four panels show
the trajectory of the relative error (translucent hairlines)
in estimating the maximum eigenvalue via randomized block Krylov
for 1000 random test matrices with block size $\ell$ as a function of the depth $q$.
The average error (\textbf{not} average log-error) is marked with a heavy orange line.
The top panel compares the average error across block size.
See Section~\ref{sec:numerics-paths}.} \label{fig:numerics-gap}
\end{center}
\end{figure}

\begin{itemize}
\item	After a burn in period of about five steps, the error begins to decay exponentially,
as described in Theorem~\ref{thm:gap}.  For $\ell = 4$, we can graphically estimate that
the decay rate is about $\econst^{- 1.38 \, q }$, while the theory predicts a decay rate
at least $\econst^{-1.26 \, q}$.

\item	For block size $\ell = 1$, the average error decays at roughly \emph{half} the
rate achieved for block size $\ell \geq 2$.  For $\ell = 2$, the average error has a distinctive
profile; for $\ell = 3, 4$, the error is qualitatively similar.
These phenomena match the bounds in Theorem~\ref{thm:gap}(2).

\item	The sample paths give a clear picture of how the errors typically evolve.
Independent of the block size, most of the sample paths decay at the same rate.
As the block size increases, there is a slight reduction in the error (seen as
a shift to the left), but this improvement is both small and diminishing.

\item	The impact of the block size becomes evident when we look at the spread of
the sample paths.  As the block size increases, the error varies much less,
and this effect is sharpened by increasing the block size.  The
apparent reason is that the Krylov method can \textit{misconverge}: it %
locks onto the \textit{second largest} eigenvalue (dashed blue line),
and it may not %
find a component of the maximum eigenvector for a significant number of iterations.
For block size $\ell = 1$, this tendency is strong enough to destroy
the rate of average convergence.  For larger block sizes, the likelihood and duration
of misconvergence both decrease.
\end{itemize}

\begin{figure}
\begin{center}
\begin{subfigure}{0.5\textwidth}
\includegraphics[width=\columnwidth]{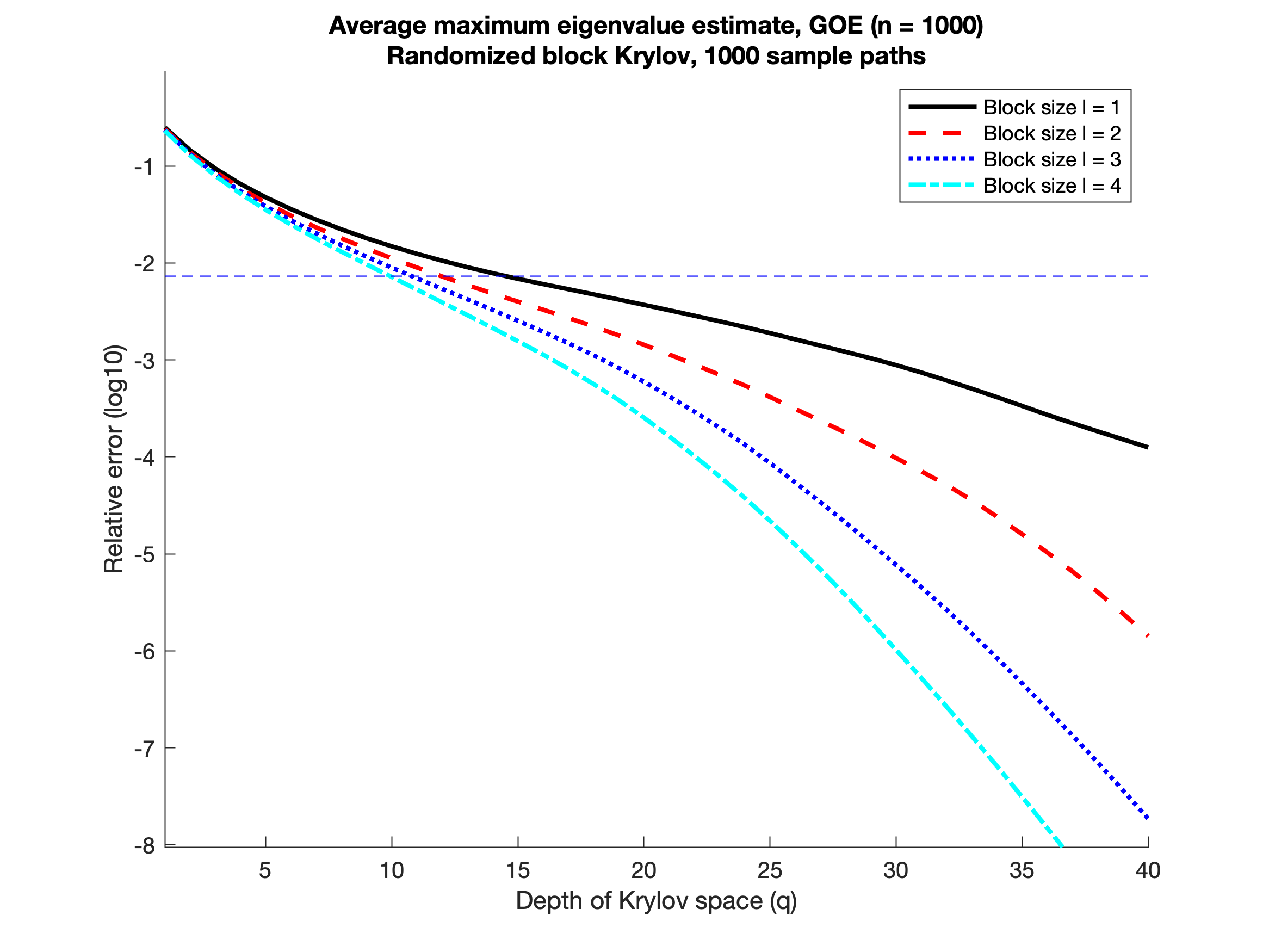}
\caption{Average error across block size}
\end{subfigure}  \\ \vspace{1pc}
\begin{subfigure}{0.45\textwidth}
\includegraphics[width=\columnwidth]{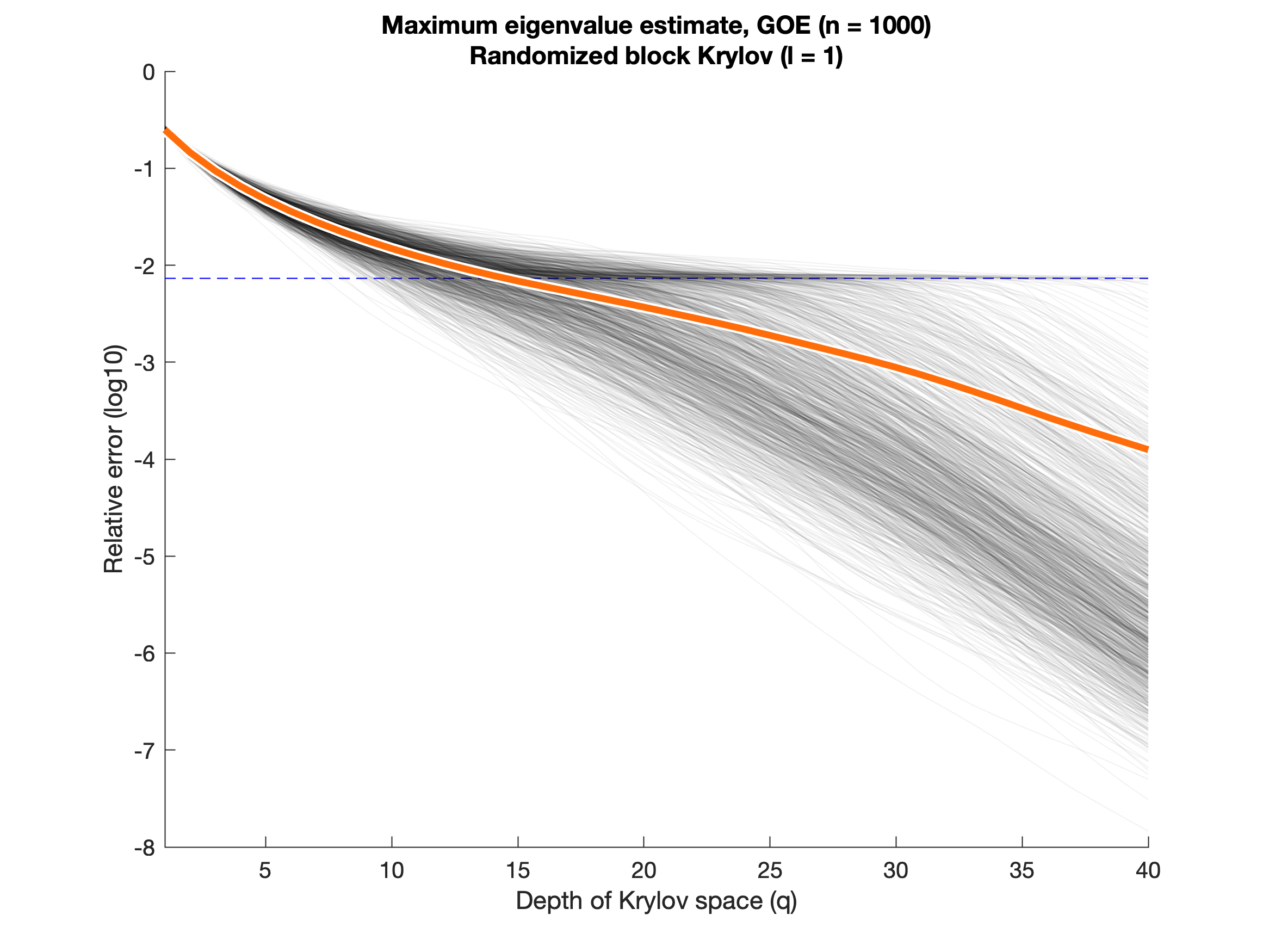}
\caption{Block size $\ell = 1$}
\end{subfigure}
\begin{subfigure}{0.45\textwidth}
\includegraphics[width=\columnwidth]{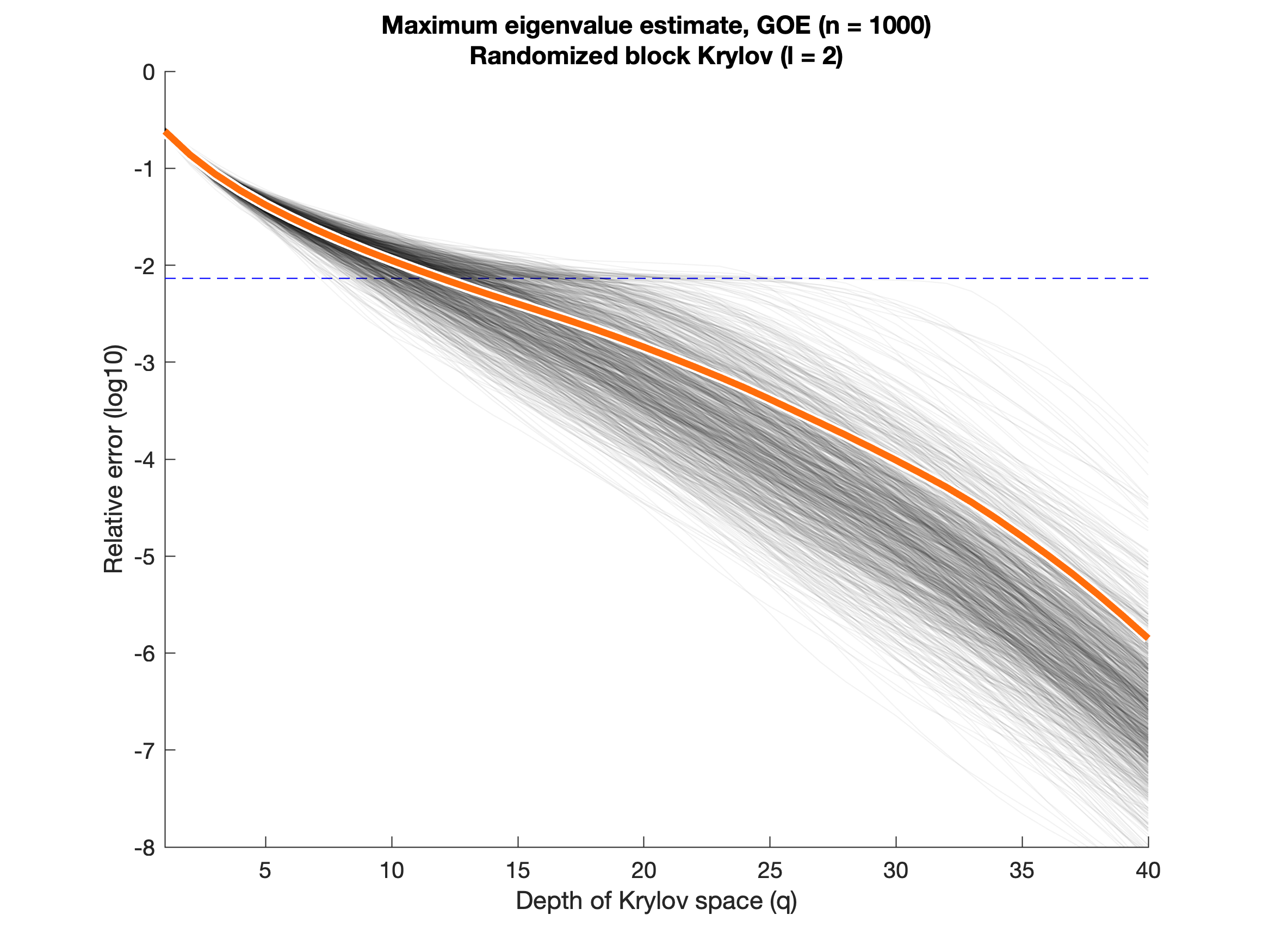}
\caption{Block size $\ell = 2$}
\end{subfigure} \\ \vspace{1pc}
\begin{subfigure}{0.45\textwidth}
\includegraphics[width=\columnwidth]{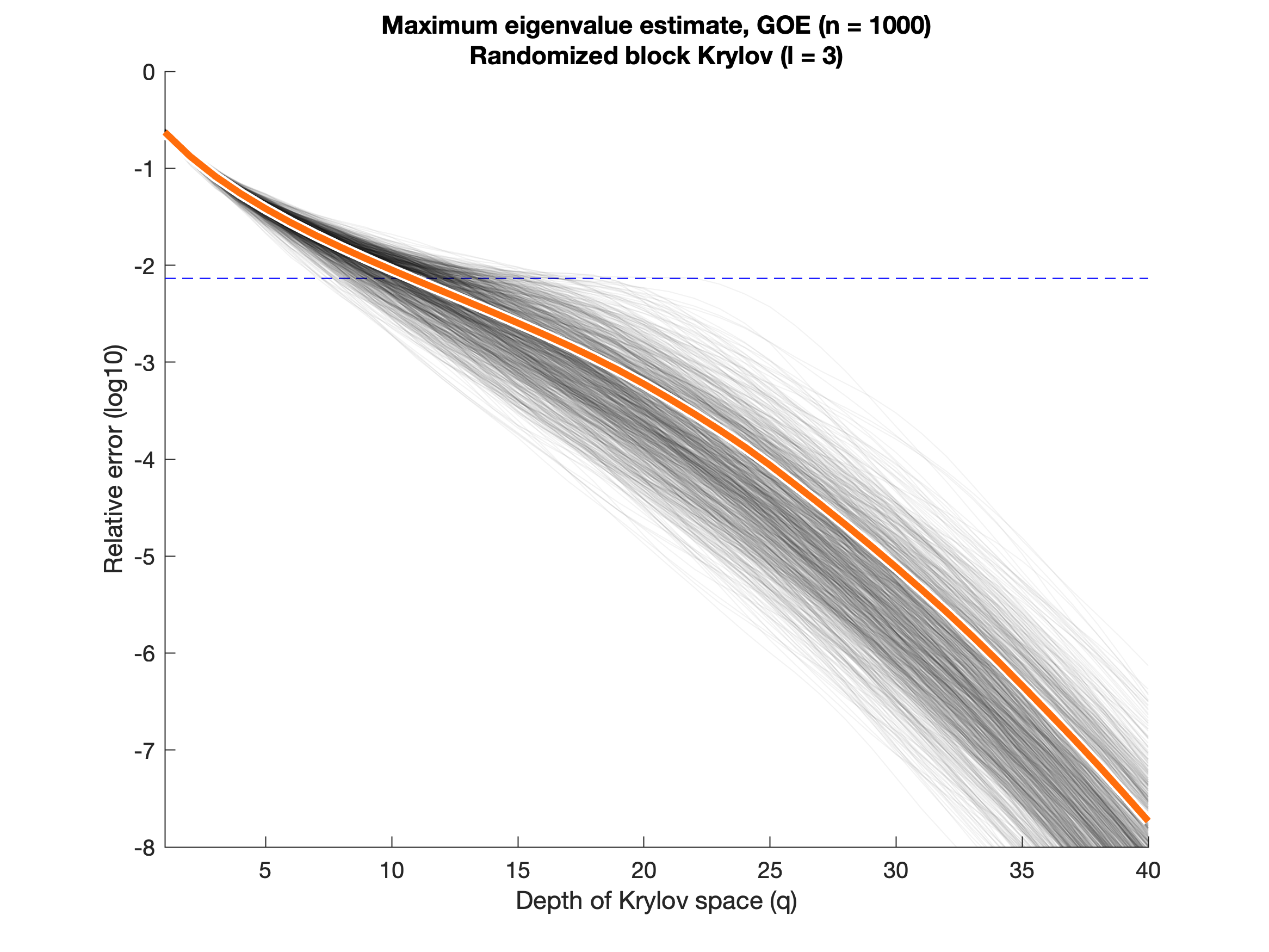}
\caption{Block size $\ell = 3$}
\end{subfigure}
\begin{subfigure}{0.45\textwidth}
\includegraphics[width=\columnwidth]{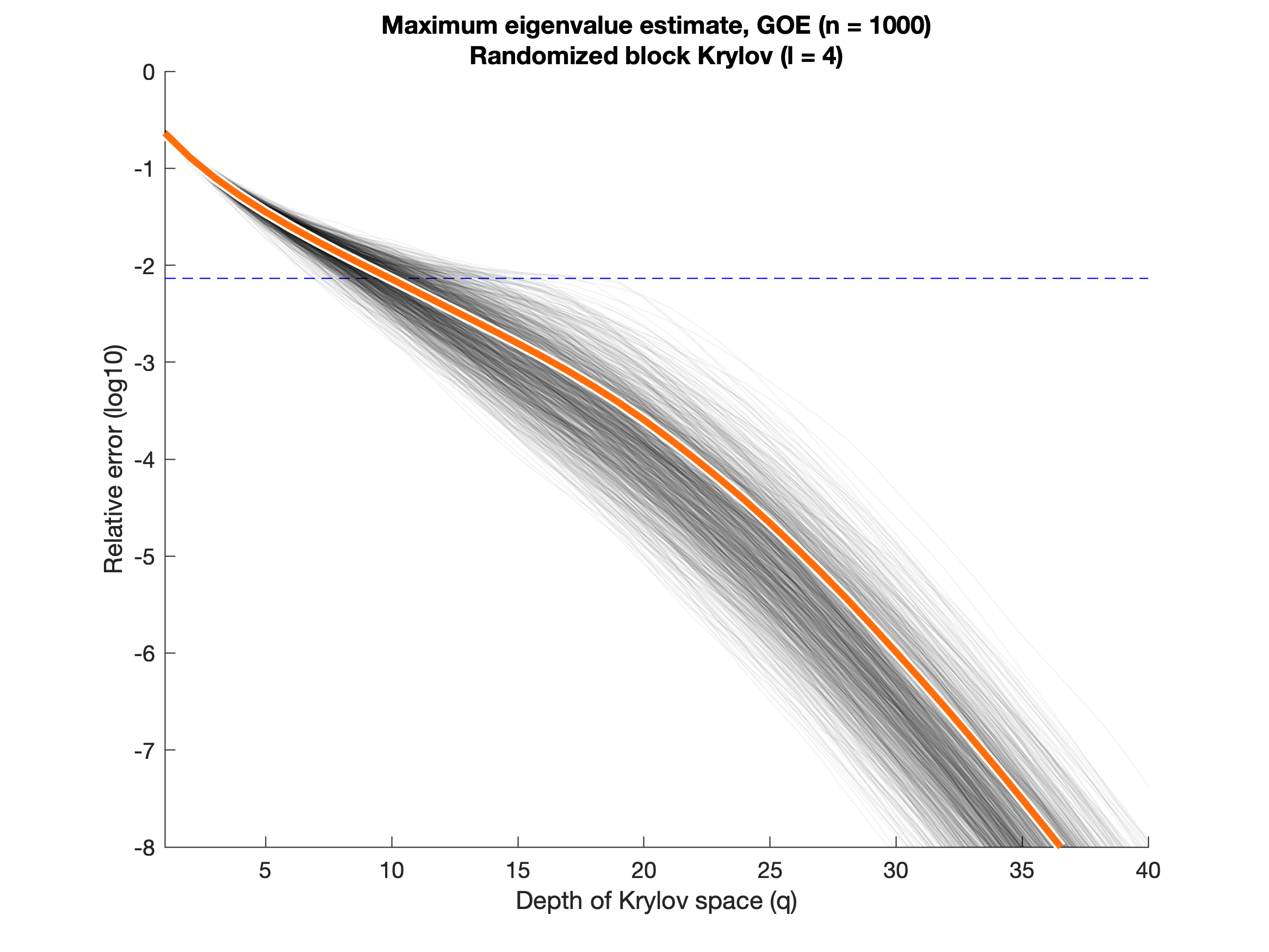}
\caption{Block size $\ell = 4$}
\end{subfigure}
\caption{\textbf{Sample paths, no spectral gap.}
The input matrix is a fixed $1000 \times 1000$ GOE matrix with spectral
gap $\gamma \approx 0.0073$ (dashed blue line).  The bottom four panels illustrate
the trajectory of the relative error (translucent hairlines)
in estimating the maximum eigenvalue via the randomized block Krylov method
for 1000 random test matrices with block size $\ell$ as a function of the depth $q$.
The average error (\textbf{not} average log-error) is marked with a heavy orange line.
The top panel compares the average error across block size.
See Section~\ref{sec:numerics-paths}.}  \label{fig:numerics-gapfree}
\end{center}
\end{figure}

The second set of experiments addresses the performance
of the block Krylov method for a matrix
with a small spectral gap.  
For a fixed $1000 \times 1000$ GOE matrix (with $\gamma \approx 0.0073$),
Figure~\ref{fig:numerics-gapfree} shows
sample paths of the relative error in estimating
the maximum eigenvalue as a function
of the total depth $q$ of the Krylov space
and the block size $\ell$.
Let us highlight a few observations.

\begin{itemize}
\item	Initially, the average error decays roughly as $q^{-2}$, as predicted by Theorem~\ref{thm:gapfree}.
Once the error is sufficiently small (around $10^{-3}$), the decay becomes (super)exponential as indicated
by Theorem~\ref{thm:gap}.

\item	In the polynomial decay regime $(q \lessapprox 20)$, the block size affects the average
error weakly, as suggested by Theorem~\ref{thm:gapfree}.  In the exponential decay regime,
the block size plays a much more visible role, but the theory does not fully capture this effect.

\item	Regardless of the block size, the bulk of the sample paths decay more quickly than
the average error.  The block size has less of an effect on the typical error than on the
average error.  Nevertheless, for larger block size, we quickly achieve more digits of accuracy.

\item	As in the first experiment, the block size has a major impact on the variability of the error.
As we increase the block size, the sample paths start to cluster sharply, so the typical
error and the average error align with each other.  Misconvergence is also
visible here, and it can be mitigated by increasing the block size.
\end{itemize}

\subsection{Burn-in: The Role of Tail Content}

Next, we examine how the tail content affects the burn-in period
for the randomized block Krylov method.

Let us consider how the burn-in increases with the dimension
of a matrix with limited spectral decay.
For $n \times n$ gapped GOE matrices with $\gamma = 0.1$,
Figure~\ref{fig:burn-dimen} shows how the average error
evolves as a function of the depth $q$ of the Krylov
space with block size $\ell = 2$.  The main point is
that the error initially stagnates before starting to
decay exponentially.  The stagnation period increases
in proportion to the logarithm of the dimension $n$.
As we saw in Figure~\ref{fig:goe-srk},
this same behavior is visible in the size of
$\log \srank(\nu)$ for small values of $\nu$.

Now, we look at how the burn-in depends on the rate of decay
of the eigenvalues of a matrix.  For an $8192 \times 8192$
gapped power law matrix with $\gamma = 0.1$,
Figure~\ref{fig:burn-decay} charts how the
average error decays as a function of the depth $q$
for block size $\ell = 2$.  We observe that the error
initially decays exponentially at a slow rate.
After a burn-in period, the error begins to decline
at a faster exponential rate.  The length of
the initial trajectory increases in proportion with
the logarithm of the order $p$ of the power law.
From Figure~\ref{fig:pow-srk}, we detect that
this speciation reflects the spread of $\log \srank(\nu)$
for small values of $\nu$.

\begin{figure}
\begin{subfigure}{0.49\textwidth}
\includegraphics[width=\columnwidth]{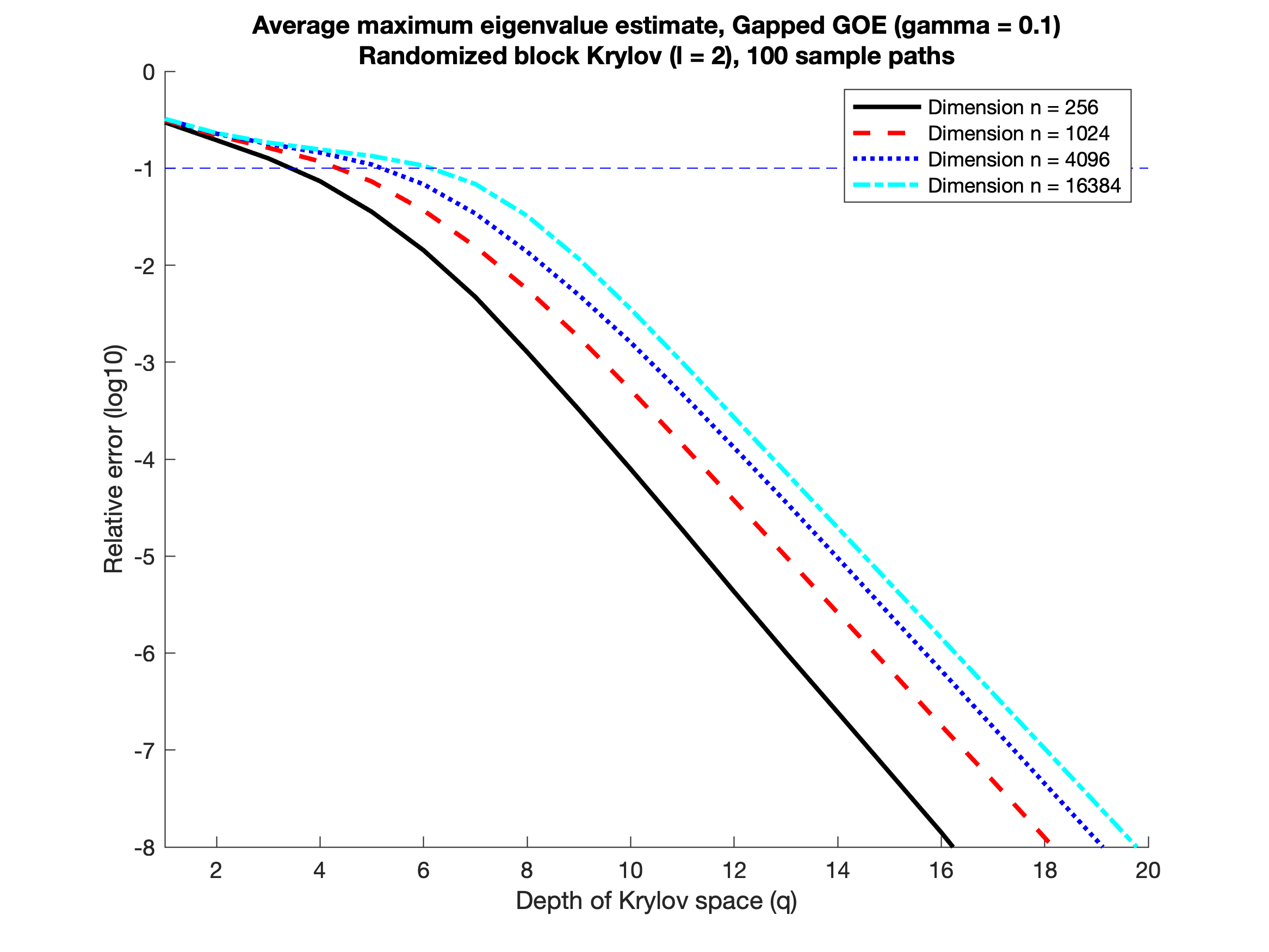}
\caption{Role of dimension} \label{fig:burn-dimen}
\end{subfigure}
\begin{subfigure}{0.49\textwidth}
\includegraphics[width=\columnwidth]{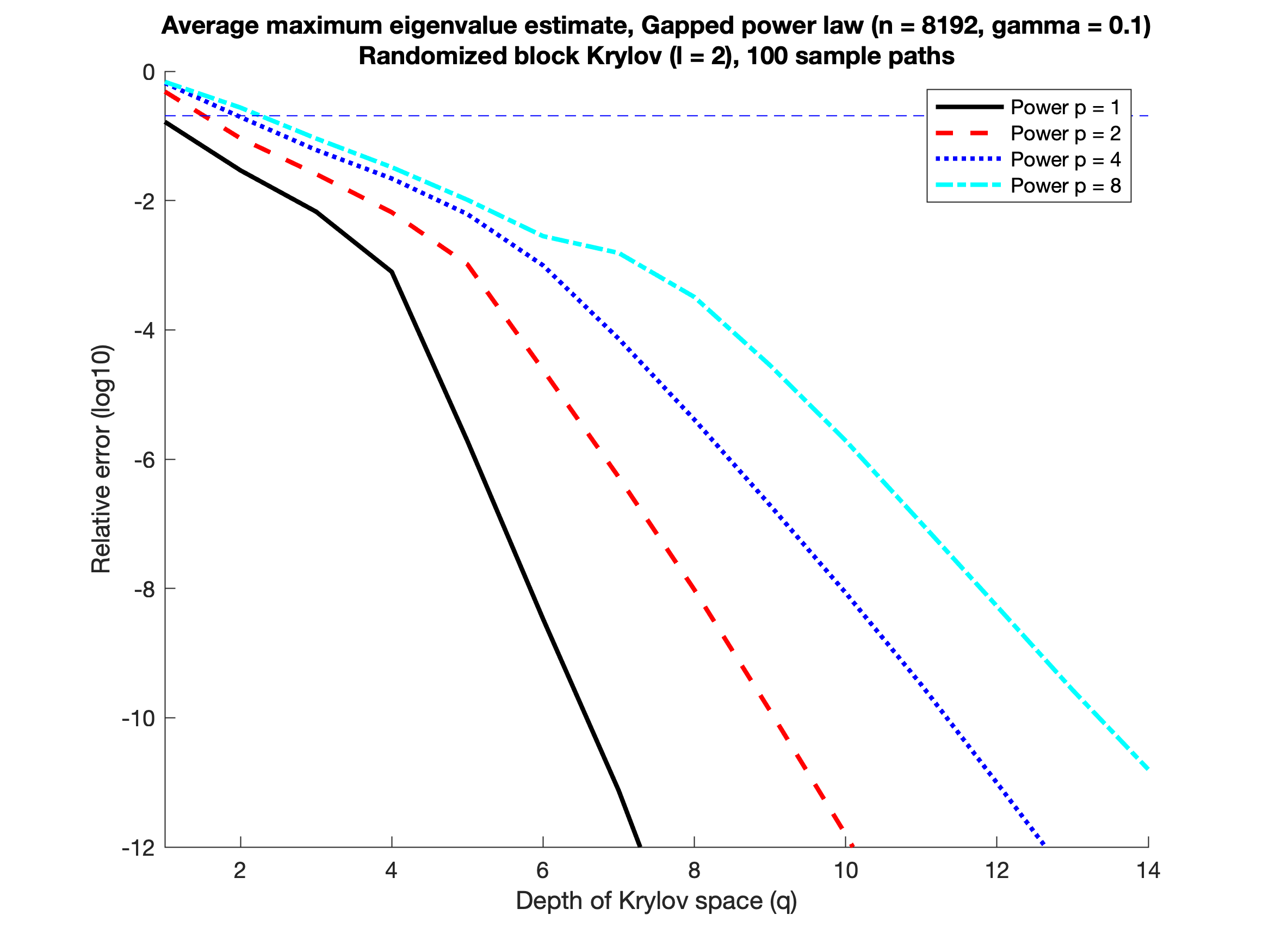}
\caption{Role of tail decay} \label{fig:burn-decay}
\end{subfigure}
\caption{\textbf{Burn-in as a function of tail content.} These plots show
the average error in estimating
the maximum eigenvalue as a function of depth $q$ with block size $\ell = 2$.
In the left panel, we consider gapped GOE matrices of increasing dimension $n$.
In the right panel, we consider gapped power law matrices with decreasing rate $p$
of tail decay.}
\end{figure}

\subsection{Conclusions}

The numerical experiments presented in this section
confirm many of our theoretical predictions about
the behavior of randomized block Krylov methods
for estimating the maximum eigenvalue of a symmetric
matrix.
Increasing the block size results in a dramatic
reduction in the probability of committing a
large error in the estimate.
The underlying mechanism is that simple Krylov method ($\ell = 1$)
is much more likely to misconverge
than a block method (even with $\ell = 2, 3$). 
The effect is so significant that block Krylov methods
can converge, on average, at twice the
rate of a simple Krylov method.  These
benefits counterbalance the increased
computational cost of a block method.

These facts have implications for design and
implementation of practical algorithms.
If the arithmetic cost is the driving concern,
then the simple randomized Krylov method
typically uses matrix--vector multiplications
more efficiently than the block methods.
But we have also seen that block methods can achieve a
specific target for the error (averaged over test matrices)
using a similar number of matrix--vector multiplies as the simple method.
In fact, in modern computing environments,
the actual cost (time, communication, energy)
of multiplying a matrix by several vectors may be
equivalent to a single matrix--vector multiplication,
in which case the block methods are the obvious choice.
Finally, when it is critical to limit the probability of failure,
then the block methods are clearly superior.

\section{History, Related Work, and Contributions}

Krylov subspace methods are a wide class of algorithms that use matrix--vector products
(``Krylov information'') to compute eigenvalues and eigenvectors and to solve linear systems.
These methods are especially appropriate in situations where we can only interact with a matrix
through its action on vectors.  In this treatment, we only discuss Krylov methods
for spectral computations.  Some of the basic algorithms in this area are the
power method, the inverse power method, subspace iteration, the Lanczos method,
the block Lanczos method, and the band Lanczos method.  See the
books~\cite{Par98:Symmetric-Eigenvalue,BDD+00:Templates-Solution,Saa11:Numerical-Methods,GVL13:Matrix-Computations}
for more background and details.

\subsection{Simple Krylov Methods}

Simple Krylov methods are algorithms based on a Krylov subspace $K_q(\mtx{A}; \vct{b})$
constructed from a single starting vector $\vct{b}$.  That is, the block size $\ell = 1$.

The power method, which dates to the 19th century, is probably the earliest algorithm
that relies on Krylov information to compute eigenvalues and eigenvectors of a symmetric matrix.
The power method is degenerate in the sense that it only keeps the highest-order term
in the Krylov subspace.

In the late 1940s, Lanczos~\cite{Lan50:Iteration-Method} developed a sophisticated
Krylov subspace method for solving the symmetric eigenvalue problem.
(More precisely, the Lanczos method uses a three-term recurrence to compute a basis
for the Krylov subspace so that the compression of the input matrix to the Krylov subspace
is tridiagonal.)
In exact arithmetic, the Lanczos estimate of the maximum eigenvalue of a symmetric
matrix coincides with $\xi_{\max}(\mtx{A}; \vct{b}; q)$ for a fixed vector $\vct{b}$.
On the other hand, the Lanczos method has complicated behavior in finite-precision arithmetic;
for example, see Meurant's book~\cite{Meu06:Lanczos-Conjugate}.

The first analysis of the Lanczos method with a deterministic starting vector $\vct{b}$ dates to
the work of Lanczos~\cite{Lan50:Iteration-Method}.
Kaniel, Paige, and Saad also made major theoretical contributions in the 1970s and 1980s;
see~\cite{Par98:Symmetric-Eigenvalue,Saa11:Numerical-Methods} for details and references.
In the 1980s, Nemirovsky, Yudin, and Chou showed that Krylov subspace methods are the optimal
deterministic algorithms for solving the symmetric eigenvalue problem, assuming we only
have access to the matrix via matrix--vector multiplication;
see~\cite{NY83:Problem-Complexity,Cho87:Optimality-Krylov,Nem91:Optimality-Krylovs}.

The burn-in period for Krylov methods has been observed in many previous works,
including~\cite{VV86:Rate-Convergence,LS05:GMRES-Convergence,BES05:Convergence-Polynomial}.
The length of the burn-in period depends on the proportion of the energy
in the test vector that is captured by the true invariant subspace.

The main contribution of the paper~\cite{VV86:Rate-Convergence}
is the observation that Krylov methods may exhibit superlinear convergence.
The explanation for this phenomenon is that the optimal polynomial
can create an artificial spectral gap by annihilating undesirable eigenvalues.
This kind of behavior is visible in Figure~\ref{fig:numerics-gapfree}.

\subsection{Random Starting Vectors}

Practitioners have often suggested using randomized variants of Krylov subspace methods.
That is, the starting vector $\vct{b}$ is chosen at random.
Historically, randomness was just used to avoid the situation where the
starting vector $\vct{b}$ is orthogonal to the leading invariant subspace of the matrix.

Later, deeper justifications for random starting vectors appeared.
The first substantive theoretical analysis of a randomized Krylov method appears
in Dixon's paper~\cite{Dix83:Estimating-Extremal} on the power method with a random starting vector.
We believe that this is the first paper to recognize that Krylov methods
can be successful without the presence of a spectral gap.

In 1992, Kuczy{\'n}ski \& Wo{\'z}niakowski published an analysis~\cite{KW92:Estimating-Largest}
of the Lanczos method with a random starting vector.
Their work highlighted the benefits of randomization, and it provided a clear explanation
of the advantages of using full Krylov information instead of the power method.  
See the papers~\cite{KW94:Probabilistic-Bounds,DM97:Randomized-Error,LW98:Estimating-Largest}
for further work in this direction.

The recent papers~\cite{SER17:Gap-Strict-Saddles,SER18:Tight-Query}
contain lower bounds on the performance of randomized algorithms
for the symmetric eigenvalue problem that use Krylov information.

\subsection{Block Krylov Methods}

Block Krylov subspace methods use multiple starting vectors to generate the Krylov subspace, instead of just one.
In other words, the algorithms form a Krylov subspace $K_q(\mtx{A}; \mtx{B})$, where $\mtx{B}$ is a matrix.
These methods were developed in the late 1960s and 1970s in an effort to resolve
multiple eigenvalues more reliably.
The block analog of the power method is
called subspace iteration; see the books~\cite{Par98:Symmetric-Eigenvalue,Saa11:Numerical-Methods} for discussion.

There are also block versions of the Lanczos method, which were developed
by Cullum \& Donath~\cite{CD74:Block-Generalization} and Golub \& Underwood~\cite{GU77:Block-Lanczos}.
(More precisely, the block Lanczos method uses a recurrence to compute a basis
for the block Krylov subspace so that the compression of the input matrix
to the block Krylov subspace is block tridiagonal.)
In exact arithmetic, the block Lanczos estimate of the maximum eigenvalue of a symmetric matrix coincides with
$\xi_{\max}(\mtx{A}; \mtx{B}; q)$ for a fixed matrix $\mtx{B}$.

Most of the early work on block Krylov subspace methods focuses on the case
where the block size $\ell$ is small, while the depth $q$ of the Krylov space
is moderately large.  This can lead to significant problems with numerical stability,
especially in the case where we use a recurrence to perform orthogonalization.
As a consequence, full orthogonalization is usually recommended.
Furthermore, most of the existing analysis of block Krylov methods is deterministic;
for example, see~\cite{Saa80:Rates-Convergence,LZ15:Convergence-Block}.

\subsection{Randomized Block Krylov Methods}

Over the last decade, randomized block Krylov subspace methods have emerged
as a powerful tool for spectral computations on large matrices.
These algorithms use a Krylov subspace $K_q(\mtx{A}; \mtx{B})$ generated by
a \emph{random} test matrix $\mtx{B}$.  

In contrast with earlier block Krylov algorithms,
contemporary methods use a much larger block size $\ell$ and a much smaller depth $q$.
Furthermore, the randomness plays a central role in supporting performance guarantees
for the algorithms.

Most of the recent literature concerns the problem of computing a low-rank approximation
to a large matrix, rather than estimating eigenvalues or invariant subspaces.
Some of the initial work on randomized algorithms
for matrix approximation appears in~\cite{PRTV98:Latent-Semantic,
FKV98:Fast-Monte-Carlo,DKM06:Fast-Monte-Carlo-II,MRT11:Randomized-Algorithm}.
Randomized subspace iteration was proposed in~\cite{RST09:Randomized-Algorithm}
and further developed in~\cite{HMT11:Finding-Structure}.
Randomized block Krylov methods that use the full block Krylov subspace were
proposed in the papers~\cite{RST09:Randomized-Algorithm,HMST11:Algorithm-Principal};
see also~\cite{DIKM18:Structural-Convergence}.
See~\cite{HMT11:Finding-Structure} for more background and history.

There is some theoretical and empirical evidence~\cite{HMST11:Algorithm-Principal,MM15:Randomized-Block}
that randomized block Krylov methods can produce low-rank matrix approximations
with higher accuracy and with less computation than randomized subspace iteration.

\subsection{Analysis of Randomized Block Krylov Methods}

There is a growing body of literature that develops theoretical performance
guarantees for randomized block Krylov methods.
The papers~\cite{RST09:Randomized-Algorithm,HMT11:Finding-Structure,
Gu15:Subspace-Iteration,MM15:Randomized-Block}
contain theoretical analyses of randomized subspace iteration.
The papers~\cite{MM15:Randomized-Block,WZZ15:Improved-Analyses,DIKM18:Structural-Convergence}
contain theoretical analysis of randomized methods that use the
full block Krylov space.  These works all focus on low-rank matrix approximation.

\subsection{Contemporary Work}

After the initial draft of this paper and its companion~\cite{Tro18:Analysis-Randomized-TR}
was completed, several other research groups
released new work on randomized (block) Krylov methods. %
The paper~\cite{DI19:Low-Rank-Matrix} shows that randomized block Lanczos can produce
excellent low-rank matrix approximations, even without the presence of a spectral gap.
The paper~\cite{YGL18:Superlinear-Convergence} demonstrates that randomized block Lanczos gives
excellent estimates for the singular values of a general matrix with spectral decay.

\subsection{Contributions}

We set out to develop highly refined bounds for
the behavior of randomized block Krylov methods that use the full Krylov subspace.
Our aim is to present useful and informative results in the spirit of Saad~\cite{Saa80:Rates-Convergence},
Kuczi{\'n}ski \& Wo{\'z}niakowski~\cite{KW92:Estimating-Largest},
and Halko et al.~\cite{HMT11:Finding-Structure}.
Our research has a number of specific benefits over prior work.

\begin{itemize}
\item	We have shown that randomized block Krylov methods have exceptional
performance for matrices with spectral decay.  In fact, for matrices with
polynomial spectral decay, we can often obtain accurate estimates
even when the block Krylov subspace has \emph{constant} depth.

\item	We have obtained detailed information about the role of the block size $\ell$.
In particular, increasing the block size drives down failure probabilities exponentially fast.

\item	The companion paper~\cite{Tro18:Analysis-Randomized-TR} gives the first results on the performance of randomized
block Krylov methods for the symmetric eigenvalue problem.

\item	Our bounds have explicit and modest constants, which gives the bounds
some predictive power.
\end{itemize}

\noindent
We hope that these results help clarify the benefits of randomized block Krylov methods.
We also hope that our work encourages researchers to develop new implementations
of these algorithms that fully exploit contemporary computer architectures.

\section{Preliminaries}

Before we begin the proofs of the main results, Theorems~\ref{thm:gapfree} and~\ref{thm:gap},
we present some background information.
In Section~\ref{app:rot-invar}, we justify the claim that
the test matrix should have a uniformly random range.
In Section~\ref{app:chebyshev}, we introduce the Chebyshev
polynomials of the first and second type, and we develop
the properties that we need to support our arguments.
Expert readers may wish to skip to Section~\ref{sec:error-formula}
for proofs of the main results.

\subsection{Rotationally Invariant Distributions}
\label{app:rot-invar}

We wish to estimate spectral properties of a matrix
using randomized block Krylov information.
In particular, we aim to establish probabilistic upper bounds
on the error in these spectral estimates for any input matrix.
This section contains a general argument that clarifies why we
ought to use a random test matrix with a rotationally invariant range
in these applications.

\begin{proposition}[Uniformly Random Range] \label{prop:random-range}
Consider any bivariate function
$f : \R^{n \times n} \times \R^{n \times \ell} \to \R$
that is orthogonally invariant:
$$
f( \mtx{A}; \mtx{B}) = f( \mtx{U A U}^*; \mtx{U B} )
	\quad\text{for each orthogonal $\mtx{U} \in \R^{n \times n}$.}
$$
Fix a symmetric $n \times n$ matrix $\mtx{\Lambda}$, and consider the orthogonal orbit
$$
\mathcal{A} := \mathcal{A}(\mtx{\Lambda}) := \big\{ \mtx{U\Lambda U}^* : \text{$\mtx{U} \in \R^{n \times n}$ orthogonal} \big\}.
$$
Let $\mtx{B} \in \R^{n \times \ell}$ be a random matrix.
Let $\mtx{V} \in \R^{n \times n}$ be a uniformly random orthogonal matrix,
drawn independently from $\mtx{B}$.  Then
$$
\begin{aligned}
\max_{\mtx{A} \in \mathcal{A}} {} \Expect_{\mtx{V}, \mtx{B}} f(\mtx{A}; \mtx{VB})
	&\leq \max_{\mtx{A} \in \mathcal{A}} {} \Expect_{\mtx{B}} f(\mtx{A}; \mtx{B}). %
\end{aligned}
$$
\end{proposition}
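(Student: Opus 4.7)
The plan is to prove the inequality with two applications of the orthogonal invariance hypothesis on $f$, together with the left-invariance of Haar measure on the orthogonal group. The key observation is that under rotational averaging, the left-hand side is \emph{constant} over the orbit $\mathcal{A}$, whereas the right-hand side still depends on $\mtx{A}$; once we identify the common value of the left-hand side, Fubini and a ``max dominates expectation'' step finish the job.

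First, I would show that the function $\mtx{A} \mapsto \Expect_{\mtx{V}, \mtx{B}} f(\mtx{A}; \mtx{VB})$ is constant on the orbit $\mathcal{A}$. Given $\mtx{A} = \mtx{U}_0 \mtx{\Lambda} \mtx{U}_0^*$ for some orthogonal $\mtx{U}_0$, invoking the orthogonal invariance of $f$ with the matrix $\mtx{U}_0^*$ yields
$$
f(\mtx{A}; \mtx{VB}) = f\bigl(\mtx{U}_0^* \mtx{A} \mtx{U}_0;\; \mtx{U}_0^* \mtx{V} \mtx{B}\bigr) = f\bigl(\mtx{\Lambda};\; \mtx{U}_0^* \mtx{V} \mtx{B}\bigr).
$$
Since $\mtx{V}$ is uniformly distributed on the orthogonal group and independent of $\mtx{B}$, the product $\mtx{U}_0^* \mtx{V}$ has the same distribution as $\mtx{V}$ and is still independent of $\mtx{B}$. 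Hence
$$
\Expect_{\mtx{V}, \mtx{B}} f(\mtx{A}; \mtx{VB}) = \Expect_{\mtx{V}, \mtx{B}} f(\mtx{\Lambda}; \mtx{VB}),
$$
which is independent of the choice $\mtx{A} \in \mathcal{A}$. Taking the maximum over $\mathcal{A}$ simply leaves this common value.

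Next, I would apply the orthogonal invariance of $f$ a second time, this time with $\mtx{U} = \mtx{V}^*$, to rewrite this common value as an average over the orbit on the first slot:
$$
f(\mtx{\Lambda}; \mtx{VB}) = f\bigl(\mtx{V}^* \mtx{\Lambda} \mtx{V};\; \mtx{V}^* \mtx{V} \mtx{B}\bigr) = f\bigl(\mtx{V}^* \mtx{\Lambda} \mtx{V};\; \mtx{B}\bigr).
$$
Since $\mtx{V}^* \mtx{\Lambda} \mtx{V} \in \mathcal{A}$ almost surely, Fubini's theorem (conditioning on $\mtx{V}$) gives
$$
\Expect_{\mtx{V}, \mtx{B}} f(\mtx{\Lambda}; \mtx{VB}) = \Expect_{\mtx{V}} \bigl[ \Expect_{\mtx{B}} f(\mtx{V}^* \mtx{\Lambda} \mtx{V}; \mtx{B}) \bigr] \leq \Expect_{\mtx{V}} \max_{\mtx{A} \in \mathcal{A}} \Expect_{\mtx{B}} f(\mtx{A}; \mtx{B}) = \max_{\mtx{A} \in \mathcal{A}} \Expect_{\mtx{B}} f(\mtx{A}; \mtx{B}),
$$
the last equality holding because the quantity inside $\Expect_{\mtx{V}}$ no longer depends on $\mtx{V}$. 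Combining the two displays yields the claimed bound.

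There is no real obstacle here; the only mild subtlety is keeping track of which argument $\mtx{U}$ takes in each application of orthogonal invariance, and verifying that the independence of $\mtx{V}$ and $\mtx{B}$ justifies pulling the conditional expectation inside the maximum. The argument would proceed cleanly without having to invoke any property of $\mtx{B}$ other than its independence from $\mtx{V}$, and it never uses any structure of $f$ beyond the orthogonal invariance assumption and measurability sufficient to apply Fubini.
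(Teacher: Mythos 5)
Your proof is correct and rests on the same core ideas as the paper's: apply orthogonal invariance with $\mtx{U} = \mtx{V}^*$ to move the rotation from the second slot to the first, use Fubini to condition on $\mtx{V}$, and bound the inner expectation by the maximum over $\mathcal{A}$. The only cosmetic difference is your preliminary observation (via left-invariance of Haar measure) that $\mtx{A}\mapsto\Expect_{\mtx{V},\mtx{B}} f(\mtx{A};\mtx{VB})$ is constant on the orbit, which lets you drop the outer $\max$ on the left-hand side before proceeding; the paper simply keeps that max and absorbs the same step into a single Jensen-type inequality $\max\Expect \le \Expect\max$.
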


\begin{proof}
By orthogonal invariance of $f$,
$$
\begin{aligned}
\max_{\mtx{A} \in \mathcal{A}} {} \Expect_{\mtx{V}, \mtx{B}} f(\mtx{A}; \mtx{VB})
	&= \max_{\mtx{A} \in \mathcal{A}} {} \Expect_{\mtx{V}, \mtx{B}} f(\mtx{V}^* \mtx{A} \mtx{V}; \mtx{B}) \\
	&\leq \Expect_{\mtx{V}} \max_{\mtx{A} \in \mathcal{A}} {} \Expect_{\mtx{B}} f(\mtx{V}^* \mtx{A} \mtx{V}; \mtx{B})
	= \max_{\mtx{A} \in \mathcal{A}} {} \Expect_{\mtx{B}} f(\mtx{A}; \mtx{B}).
\end{aligned}
$$
The inequality is Jensen's, and the last identity follows from the definition
of the class $\mathcal{A}$.
\end{proof}

Now, consider the problem of estimating the maximum eigenvalue of the worst matrix
with eigenvalue spectrum $\mtx{\Lambda}$ using a block Krylov method.
Define the orthogonally invariant%
\footnote{The relative error~\eqref{eqn:error-def} is orthogonally invariant because the eigenvalues of $\mtx{A}$ are orthogonally invariant and~\eqref{eqn:rotation-invar} states that the eigenvalue estimate~\eqref{eqn:eigenvalue-estimate}
is also orthogonally invariant.}
function $f(\mtx{A}; \mtx{B}) =  \mathrm{err}(\xi_{\max}(\mtx{A}; \mtx{B}; q))$.
For any distribution on $\mtx{B}$ and for a uniformly random orthogonal matrix $\mtx{V}$,
Proposition~\ref{prop:random-range} states that
$$
\max_{\mtx{A} \in \mathcal{A}} {} \Expect \mathrm{err}(\xi_{\max}(\mtx{A}; \mtx{VB}; q))
	\leq \max_{\mtx{A} \in \mathcal{A}} {} \Expect \mathrm{err}(\xi_{\max}(\mtx{A}; \mtx{B}; q)).
$$
That is, the random test matrix $\mtx{VB}$ %
is better than the test matrix $\mtx{B}$ if we want to minimize
the worst-case expectation of the error;
the same kind of bound holds for tail probabilities.
We surmise that the test matrix $\mtx{B}$ should have a uniformly random range.
Moreover, because of the co-range invariance~\eqref{eqn:range-invar},
we can select \emph{any} distribution with uniformly random range,
such as the standard normal matrix $\mtx{\Omega}$.

\subsection{Chebyshev Polynomials}
\label{app:chebyshev}

In view of~\eqref{eqn:krylov-poly}, we can interpret Krylov subspace methods
as algorithms that compute best polynomial approximations.
The analysis of Krylov methods often involves
a careful choice of a specific polynomial that witnesses the behavior
of the algorithm.  Chebyshev polynomials arise naturally in this
connection because they are the solutions to minimax approximation problems.

This section contains the definitions of the Chebyshev polynomials of the
first and second type, and it derives some key properties of these polynomials.
We also construct the specific polynomials that arise in our analysis.
Some of this material is drawn from the paper~\cite{KW92:Estimating-Largest}
by Kuczy{\'n}ski \& Wo{\'z}niakowski.
For general information about Chebyshev polynomials, we refer the reader
to \cite{AS64:Handbook-Mathematical,NIST10:Handbook-Mathematical}.

\subsubsection{Chebyshev Polynomials of the First Kind}

We can define the Chebyshev polynomials of the first kind via the formula
\begin{equation} \label{eqn:Tp}
T_p(s) := \frac{1}{2} \left[ \left(s + \sqrt{s^2 - 1}\right)^p + \left(s - \sqrt{s^2 - 1}\right)^p \right]
	\quad\text{for $s \in \R$ and $p \in \Z_+$.}
\end{equation}
Using the binomial theorem, it is easy to check that this expression coincides
with a polynomial of degree $p$ with real coefficients.

We require two properties of the Chebyshev polynomial $T_p$.
First, it satisfies a uniform bound on the unit interval:
\begin{equation} \label{eqn:Tp-minmax}
\abs{ T_p(s) } \leq 1
\quad\text{for $\abs{s} \leq 1$.}
\end{equation}
Indeed, it is well-known that $2^{-p} T_p$ is the unique monic polynomial of degree $p$
with the least maximum value on the interval $[-1,+1]$.
The simpler result~\eqref{eqn:Tp-minmax} is an immediate consequence of the representation
$$
T_p(s) = \cos\big( p \cos^{-1} (s) \big)
\quad\text{for $\abs{s} \leq 1$.}
$$
The latter formula follows from~\eqref{eqn:Tp} after we apply de Moivre's theorem
for complex exponentiation.

Second, the Chebyshev polynomial grows quickly outside of the unit interval:
\begin{equation} \label{eqn:Tp-growth}
T_p\left( \frac{1+s}{1-s} \right)
	\geq \frac{1}{2} \left( \frac{1 + \sqrt{s}}{1 - \sqrt{s}} \right)^p
	\quad\text{for $0 \leq s < 1$.}
\end{equation}
This estimate is a direct consequence of the definition~\eqref{eqn:Tp}.

\subsubsection{The Attenuation Factor}

Let $\beta \in [0, 1]$ be a parameter, and define the quantity
\begin{equation} \label{eqn:delta}
\delta := \delta(\beta) = \frac{1 - \sqrt{1 - \beta}}{1 + \sqrt{1 - \beta}}.
\end{equation}
This definition is closely connected with the growth properties of $T_p$.
We can bound the attenuation factor in two ways:
\begin{equation} \label{eqn:delta-bd}
\delta \leq \econst^{- 2 \sqrt{1 - \beta}}
\quad\text{and}\quad
\delta \leq \beta \cdot 2^{-2\sqrt{1 - \beta}}.
\end{equation}
These numerical inequalities can be justified using basic calculus.
The first is very accurate for $\beta \approx 1$, while the second
is better across the full range $\beta \in [0, 1]$.

\subsubsection{First Polynomial Construction}
\label{app:first-poly}

Choose a nonnegative integer partition $q = q_1 + q_2$.  Consider the polynomial
\begin{equation} \label{eqn:phi-def}
\phi_{\beta, q_1,q_2}(s) := \frac{s^{q_1} T_{q_2}((2/\beta) s - 1)}{T_{q_2}((2/\beta) - 1)}
\quad\text{for $s \in \R$.}
\end{equation}
The polynomial $\phi_{\beta, q_1,q_2}$ has degree $q$, and it is normalized so that
it takes the value one at $s = 1$.
It holds that
\begin{equation} \label{eqn:phi-bound}
\phi_{\beta, q_1,q_2}^2(s) \leq \frac{s^{2q_1}}{T_{q_2}^2((2/\beta) - 1)}
	\leq 4 s^{2q_1} \left( \frac{1 - \sqrt{1-\beta}}{1 + \sqrt{1-\beta}} \right)^{2q_2}
	= 4 s^{2q_1} \delta^{2q_2}
	\quad\text{for $0 \leq s \leq \beta$.}
\end{equation}
The first inequality follows from~\eqref{eqn:Tp-minmax},
and the second follows from~\eqref{eqn:Tp-growth}.
Last, we instate the definition~\eqref{eqn:delta}.

\begin{remark}[The Monomial]
In contrast to~\cite{KW92:Estimating-Largest} and other prior work,
we use products of Chebyshev polynomials with low-degree monomials.
This seemingly minor change leads to results phrased in terms of
the stable rank, rather than the ambient dimension.  This is equivalent
to a bound on $q_1$ iterations of the subspace iteration method,
followed by $q_2$ iterations of the block Krylov method.
\end{remark}

\subsubsection{Chebyshev Polynomials of the Second Kind}

We can define the Chebyshev polynomials of the second kind via the formula
\begin{equation} \label{eqn:Up}
U_p(s) := \frac{1}{2\sqrt{s^2 - 1}} \left[ \left(s + \sqrt{s^2 - 1}\right)^{p+1} - \left(s - \sqrt{s^2 - 1}\right)^{p+1} \right]
	\quad\text{for $s \in \R$ and $p \in \Z_+$.}
\end{equation}
Using the binomial theorem, it is easy to check that this expression coincides
with a polynomial of degree $p$ with real coefficients.  Moreover, when $p$
is an even number, the polynomial $U_p$ is an even function.

We require two properties of the Chebyshev polynomial $U_p$.
First, it satisfies a weighted uniform bound on the unit interval:
\begin{equation} \label{eqn:Up-minmax}
\abs{ \sqrt{1 - s^2} \, U_p(s) } \leq 1
\quad\text{for $\abs{s} \leq 1$.}
\end{equation}
In fact, $2^{-p} U_p$ is the unique monic polynomial that minimizes
the maximum value of the left-hand side of~\eqref{eqn:Up-minmax}
over the interval $[-1,+1]$; see~\cite[Eqn.~(23) et seq.]{KW92:Estimating-Largest}.
The simpler result~\eqref{eqn:Up-minmax} is an immediate consequence of the representation
$$
U_p(s) = \frac{ \sin\big( (p+1) \cos^{-1} (s) \big) }{ \sqrt{1 - s^2} }
\quad\text{for $\abs{s} \leq 1$.}
$$
The latter formula follows from~\eqref{eqn:Up} after we apply de Moivre's theorem
for complex exponentiation.

Second, we can evaluate the polynomial $U_{2p}$ at a specific point:
\begin{equation} \label{eqn:Up-value}
U_{2p}^2\big( \sqrt{1/\smash{\beta}} \big)
	= \frac{\beta \big(1-\delta^{2p+1} \big)^2}{4(1-\beta) \delta^{2p+1}}
	\quad\text{where $0 < \beta \leq 1$.}
\end{equation}
We defined $\delta = \delta(\beta)$ above in~\eqref{eqn:delta}.
The formula~\eqref{eqn:Up-value} is a direct---but unpleasant---consequence of the definition~\eqref{eqn:Up}.

\subsubsection{Second Polynomial Construction}
\label{app:second-poly}

As before, introduce a parameter $\beta \in [0, 1]$.
Choose a nonnegative integer partition $q = q_1 + q_2$.
Consider the polynomial
\begin{equation} \label{eqn:psi-def}
\psi_{\beta,q_1,q_2}(s) := \frac{s^{q_1} U_{2q_2}\big(\sqrt{s/\beta} \big)}{U_{2q_2}\big(\sqrt{1/\beta} \big)}
\quad\text{for $s \in \R$.}
\end{equation}
Since $U_{2q_2}$ is an even polynomial, this expression defines a polynomial $\psi_{\beta,q_1,q_2}$
with degree $q$ and normalized to take the value one at $s = 1$.
We have the bound
\begin{equation} \label{eqn:psi-bound}
(\beta - s) \, \psi_{\beta,q_1,q_2}^2(s)
	\leq \frac{s^{2q_1} \beta}{U_{2q_2}^2(\sqrt{1/\beta})}
	= \frac{4 (1 - \beta) s^{2q_1}  \delta^{2q_2+1}}{\big(1 - \delta^{2q_2+1}\big)^2}
	\quad\text{for $0 \leq s \leq \beta$.}
\end{equation}
The inequality~\eqref{eqn:psi-bound} follows from~\eqref{eqn:Up-minmax},
and the equality follows from~\eqref{eqn:Up-value}.
Last, we instate the definition~\eqref{eqn:delta}.
For $s > \beta$, the polynomial grows very rapidly.

\section{The Error in the Block Krylov Subspace Method}
\label{sec:error-formula}

In this section, we initiate the proof of Theorems~\ref{thm:gapfree} and~\ref{thm:gap}.
Along the way, we establish Proposition~\ref{prop:few-eigs}.
First, we show how to replace the block Krylov subspace by a simple Krylov subspace (with block size one).
Afterward, we develop an explicit representation for the error in the eigenvalue estimate
derived from the simple Krylov subspace.  Finally, we explain how to construct the
simple Krylov subspace so that we preserve the benefits of computing a block Krylov subspace.

The ideas in this section are drawn from several sources.
The strategy of reducing a block Krylov subspace to a simple Krylov subspace
already appears in~\cite{Saa80:Rates-Convergence}, but we use a different
technique that is adapted from~\cite{HMT11:Finding-Structure}.  The kind of
analysis we perform for the simple Krylov method is standard; we have
closely followed the presentation in~\cite{KW92:Estimating-Largest}.

\subsection{Simplifications}

Suppose that the input matrix $\mtx{A}$ is a multiple of the identity matrix.
From the definitions~\eqref{eqn:krylov-matrix},~\eqref{eqn:krylov-subspace}, and \eqref{eqn:eigenvalue-estimate},
it is straightforward to check that the eigenvalue estimate
$\xi_{\max}(\mtx{A}; \mtx{\Omega}; q) = \lambda_{\max}(\mtx{A})$
with probability one for each $q \geq 0$.
Therefore, we may as well assume that $\mtx{A}$ is not a multiple of the identity.

In view of~\eqref{eqn:input-diag}, we may also assume that the input matrix is diagonal
with weakly decreasing entries:
\begin{equation} \label{eqn:A-diag}
\mtx{A} = \diag( a_1, a_2, \dots, a_n )
\quad\text{where}\quad
a_1 \geq a_2 \geq \dots \geq a_n.
\end{equation}
Since $\mtx{A}$ has at least two distinct eigenvalues, we may normalize
the extreme eigenvalues of $\mtx{A}$:
\begin{equation} \label{eqn:normalization}
a_1 = 1
\quad\text{and}\quad
a_n = 0.
\end{equation}
The main results are all stated in terms of affine invariant quantities,
so we have not lost any generality.  These choices help to streamline the proof.

\subsection{Block Krylov Subspaces and Simple Krylov Subspaces}

The first key step in the argument is to reduce the block Krylov subspace to
a Krylov subspace with block size one.  This idea allows us to avoid any
computations involving matrices.
To that end, recall that the block Krylov subspace takes the form
$$
K_q(\mtx{A}; \mtx{\Omega})
	= \range \begin{bmatrix}
	\mtx{\Omega} & \mtx{A\Omega} & \mtx{A}^2 \mtx{\Omega} & \dots & \mtx{A}^q \mtx{\Omega}
	\end{bmatrix}.
$$
In particular, for any vector $\vct{x} \in \range(\mtx{\Omega})$,
$$
K_q(\mtx{A}; \vct{x})
	= \range \begin{bmatrix}
	\vct{x} & \mtx{A}\vct{x} & \mtx{A}^2 \vct{x} & \dots & \mtx{A}^q \vct{x}
	\end{bmatrix}
	\subset K_q(\mtx{A}; \mtx{\Omega}).
$$
Later, we will make a careful choice of the vector $\vct{x}$
so that we do not abandon the benefits of computing the block Krylov subspace.

\subsection{Representation of the Error Using Polynomials}

The next step in the argument is to exploit the close relationship between
Krylov subspaces and polynomial filtering to obtain an explicit representation
of the error in the eigenvalue estimate.
Using~\eqref{eqn:krylov-poly}, we may rewrite the last display in the form
$$
K_q(\mtx{A}; \vct{x})
	= \lspan{} \big\{ \phi(\mtx{A}) \vct{x} : \phi \in \mathcal{P}_q \big\}
	\subset K_q(\mtx{A}; \mtx{\Omega}).
$$
As a consequence of this containment,
$$
\begin{aligned}
\xi_{\max}( \mtx{A}; \mtx{\Omega};q )
	= \max_{\vct{v} \in K_q(\mtx{A}; \mtx{\Omega})} \frac{\vct{v}^* \mtx{A} \vct{v}}{\vct{v}^* \vct{v}}
	\geq \max_{\vct{v} \in K_q(\mtx{A}; \vct{x})} \frac{\vct{v}^* \mtx{A} \vct{v}}{\vct{v}^* \vct{v}}
	= \max_{\phi \in \mathcal{P}_q} \frac{(\phi(\mtx{A}) \vct{x})^* \mtx{A} (\phi(\mtx{A}) \vct{x})}
	{(\phi(\mtx{A}) \vct{x})^*(\phi(\mtx{A}) \vct{x})}.
\end{aligned}
$$
Owing to the normalization~\eqref{eqn:normalization},
the relative error~\eqref{eqn:error-def} in the eigenvalue estimate satisfies
$$
\mathrm{err}(\xi_{\max}(\mtx{A}; \mtx{\Omega}; q))
	= 1 - \xi_{\max}(\mtx{A}; \mtx{\Omega}; q)
	\leq \min_{\phi \in \mathcal{P}_q} \frac{(\phi(\mtx{A}) \vct{x})^* (\Id - \mtx{A}) (\phi(\mtx{A}) \vct{x})}
	{(\phi(\mtx{A}) \vct{x})^*(\phi(\mtx{A}) \vct{x})}.
$$
The fraction is continuous on the set of polynomials $\{ \phi \in \coll{P}_q : \phi(\mtx{A}) \vct{x} \neq \vct{0} \}$,
and it is invariant under scaling of the polynomial $\phi$.
Thus, assuming that the first coordinate of $\vct{x}$ is nonzero,
we may assume that $\phi(1) \neq 0$ and then rescale so that $\phi(1) = 1$.
With the definition $\mathcal{P}_q(1) := \{ \phi \in \mathcal{P}_q : \phi(1) = 1 \}$,
we arrive at
$$
\mathrm{err}(\xi_{\max}(\mtx{A}; \mtx{\Omega}; q))
	\leq \inf_{\phi \in \mathcal{P}_q(1)} \frac{(\phi(\mtx{A}) \vct{x})^* (\Id - \mtx{A}) (\phi(\mtx{A}) \vct{x})}
	{(\phi(\mtx{A}) \vct{x})^*(\phi(\mtx{A}) \vct{x})}.
$$
We remark that this inequality becomes an equality in the case where the block size $\ell = 1$.

Invoke~\eqref{eqn:A-diag} to rewrite this bound in terms of the eigenvalues of $\mtx{A}$:
\begin{equation} \label{eqn:err-poly}
\mathrm{err}(\xi_{\max}(\mtx{A}; \mtx{\Omega}; q))
	\leq \inf_{\phi \in \mathcal{P}_q(1)} \frac{\sum_{i=1}^n X_i^2 \phi^2(a_i)(1 - a_i)}
	{\sum_{i=1}^n X_i^2 \phi^2(a_i)}
	= \inf_{\phi \in \mathcal{P}_q(1)} \frac{\sum_{i>1} X_i^2 \phi^2(a_i)(1 - a_i)}
	{X_1^2 + \sum_{i>1} X_i^2 \phi^2(a_i)}.
\end{equation}
We have introduced the coordinates of the vector $\vct{x} = (X_1, \dots, X_n)$,
and we have used normalization~\eqref{eqn:normalization} to simplify the expression.

\begin{remark}[Multiplicity]
When the maximum eigenvalue has multiplicity $m$ greater than one,
we can group the copies of the eigenvalue together in the denominator
to obtain the larger term $X_1^2 + \dots + X_m^2$.  Pursuing this
argument, we see that the algorithm converges faster when $m > 1$.
\end{remark}

\subsection{When the Matrix has Few Distinct Eigenvalues}

Suppose that the distinct eigenvalues of the input matrix $\mtx{A}$
are $\mu_1 = 1$ and $\mu_2, \dots, \mu_r$ where $r \leq q + 1$.  Consider the polynomial
$$
\phi_0(s) = \prod\limits_{i = 2}^r \frac{s - \mu_i}{1 - \mu_i} \in \mathcal{P}_{r-1}(1) \subset \mathcal{P}_q(1).
$$
This polynomial annihilates each point in the spectrum, except for $\mu_1$.
Setting $\phi = \phi_0$ in~\eqref{eqn:err-poly}, we discover that
$\mathrm{err}(\xi_{\max}(\mtx{A};\mtx{\Omega}; q)) = 0$ with probability one.
This fact appears as Proposition~\ref{prop:few-eigs}.

\subsection{Choosing the Simple Krylov Space}

The next step in the argument is to select a particular vector $\vct{x} \in \range(\mtx{\Omega})$.
Let $\vct{\omega}_1^* \in \R^{\ell}$ denote the first row of the matrix $\mtx{\Omega}$.
Set
$$
\vct{x} = \frac{\mtx{\Omega} \vct{\omega}_1}{\norm{\vct{\omega}_1}} \in \range(\mtx{\Omega}).
$$
The rows of $\mtx{\Omega}$ are statistically independent standard normal vectors,
which are rotationally invariant.  It follows that the entries of
$\vct{x} = (X_1, \dots, X_n)$ are also statistically independent.  Moreover,
\begin{equation} \label{eqn:x-distrib}
X_1 \sim \textsc{chi}(\ell)
\quad\text{and}\quad
X_i \sim \normal(0, 1)
\quad\text{for $i > 1$.}
\end{equation}
We write $\textsc{chi}(\ell)$ for the chi distribution with $\ell$ degrees of freedom.
This choice of $\vct{x}$ ensures that $X_1^2$ is large relative to the other $X_i^2$.
In particular, $X_1 \neq 0$ with probability one.

In a concrete sense, we see that the block Krylov method is an artifice for increasing
the apparent multiplicity of the maximum eigenvalue.
The argument here is inspired by the analysis
in Halko et al.~\cite[Secs.~9, 10]{HMT11:Finding-Structure}.

\begin{remark}[Complex Setting]
When the input matrix is Hermitian and the test matrix follows the complex standard normal
distribution, we obtain the same formulas with $X_1 \sim \textsc{chi}(2\ell)$ and
$X_i \sim \textsc{normal}_{\C}(0, 1)$.  In effect, the multiplicity of
every eigenvalue is doubled.
\end{remark}

\section{Probabilistic Bounds for the Error}
\label{sec:prob-bounds}

This section contains the proof of the probability bounds that appear
in Theorem~\ref{thm:gapfree} and~\ref{thm:gap}.
The argument is based on the bound~\eqref{eqn:err-poly} for the error
and the distributional properties~\eqref{eqn:x-distrib} of the random vector $\vct{x}$.

The proof is inspired by the argument in Kuczy{\'n}ski \& Wo{\'z}niakowski~\cite{KW92:Estimating-Largest},
but our approach is technically easier.  Indeed, they work with a random vector that is uniformly
distributed on the Euclidean unit sphere, which leads to a difficult multivariate integration.
In contrast, our random vector $\vct{x}$ has independent entries, which means
that we only have to compute one-dimensional integrals.

\subsection{A Bound for the Probability}

Let $\eps \in (0, 1)$ be an error tolerance.
Our goal is to control the probability $P_{\eps}$ that the relative error
in the eigenvalue estimate~\eqref{eqn:eigenvalue-estimate} is at least $\eps$.
In other words, we wish to bound
$$
P_{\eps} := \Prob{ \mathrm{err}(\xi_{\max}(\mtx{A}; \mtx{\Omega}; q)) \geq \eps }.
$$
In view of the upper bound~\eqref{eqn:err-poly} for the relative error,
we obtain the estimate
$$
P_{\eps} \leq \Prob{ \inf_{\phi \in \mathcal{P}_q(1)} \frac{\sum_{i > 1} X_i^2 \phi^2(a_i) (1 - a_i)}{X_1^2 + \sum_{i > 1} X_i^2 \phi^2(a_i)} \geq \eps }.
$$
Fix a polynomial $\phi \in \mathcal{P}_q(1)$, to be determined later.  Then rearrange the inequality
in the event:
\begin{equation} \label{eqn:Peps-bd1}
\begin{aligned}
P_{\eps} &\leq \Prob{ \sum_{i > 1} X_i^2 \phi^2(a_i)(1 - a_i) \geq \eps X_1^2 + \eps \sum_{i > 1} X_i^2 \phi^2(a_i) } \\
	&= \Prob{ - \eps X_1^2 + \sum_{i > 1} X_i^2 \phi^2(a_i)(1 - \eps - a_i) \geq 0 } \\
	&\leq \Prob{ - \eps X_1^2 +  \sum_{a_i < 1 - \eps} X_i^2 \phi^2(a_i)(1 - \eps - a_i) \geq 0 } \\
	&=: \Prob{ - \eps X_1^2 + \sum_{i \in I} c_i X_i^2 \geq 0 }.
\end{aligned}
\end{equation}
To reach the third line, we have dropped the nonpositive terms in the sum.  Then we introduced the compact notation
$$
I := \{ i : a_i <  1 - \eps \}
\quad\text{and}\quad
c_i := \phi^2(a_i)(1 - \eps - a_i) > 0.
$$
To continue the argument, we apply some elementary notions
from the theory of concentration of measure.

\subsection{The Laplace Transform Argument}

We invoke the Laplace transform method to convert the probability bound into
an expectation bound.  Introduce a parameter $\theta > 0$, to be chosen later.
Continuing from~\eqref{eqn:Peps-bd1}, we write the probability as the expectation
of a 0--1 indicator function:
$$
\begin{aligned}
P_{\eps} &\leq \Expect \mathbb{1} \left\{ - \eps X_1^2 + \sum_{i \in I} c_i X_i^2 \geq 0 \right\} \\
	&\leq  \Expect \exp\left( - \theta \eps X_1^2 + \sum_{i \in I} \theta c_i X_i^2 \right).
\end{aligned}
$$
To reach the second line, we bound the indicator $\mathbb{1}\{ s \geq 0 \}$ above by the function $s \mapsto \econst^{\theta s}$.
Write the exponential as a product, and invoke the independence of the family $\{ X_i \}$ to obtain
$$
P_{\eps} \leq \Expect \left[ \econst^{- \theta \eps X_1^2} \prod_{i \in I} \econst^{\theta c_i X_i^2}  \right]
	= \left( \Expect \econst^{- \theta \eps X_1^2} \right) \left( \prod_{i \in I} \Expect \econst^{\theta c_i X_i^2} \right) .
$$
The distributional property~\eqref{eqn:x-distrib} implies that $X_1^2$ is a chi-squared variable with $\ell$ degrees of freedom, while $X_i^2$ is a chi-squared variable with one degree of freedom.  Computing the remaining expectations is a standard exercise, which results in the bound
$$
P_{\eps} \leq  \left( 1 + 2 \theta \eps \right)^{-\ell/2}\left( \prod_{i \in I} (1 - 2 \theta c_i) \right)^{-1/2}
\quad\text{when $\theta < (2c_i)^{-1}$ for each $i \in I$.}
$$
We make a coarse estimate to arrive at
$$
P_{\eps} \leq  \left( 1 + 2 \theta \eps \right)^{-\ell/2} \left( 1 - 2 \theta  \sum_{i \in I} c_i \right)^{-1/2}
\quad\text{when $\theta < \big(2 \sum_{i \in I} c_i \big)^{-1}$.}
$$
The last bound follows from repeated application of the numerical inequality
$(1 - s)(1 - t) \geq 1 - (s + t)$, which is valid when $st \geq 0$.

Next, we must identify a suitable value for $\theta$.
It is possible to minimize the probability bound with respect to $\theta$,
but it is more expedient to select
$\theta^{-1} = 4 \sum_{i \in I} c_i$.  This choice yields
\begin{equation} \label{eqn:Peps-penult}
P_{\eps} \leq \sqrt{2} \left( 1 + \frac{\eps/2}{\sum_{i \in I} c_i} \right)^{-\ell / 2}
	= \sqrt{2} \left[ 1 + \frac{\eps/2}{\sum_{a_i < 1 - \eps} \phi^2(a_i) (1 - \eps - a_i)} \right]^{-\ell / 2}.
\end{equation}
It remains to choose a good polynomial $\phi$.
Fortunately, we have already done the work in Section~\ref{app:chebyshev}.

\subsection{Probability Bound without a Spectral Gap}

In this section, we use~\eqref{eqn:Peps-penult} to derive
the probability bound that appears in Theorem~\ref{thm:gapfree}.
Set the parameter $\beta = 1 - \eps$.  Obviously,
$$
\sum_{a_i < 1 - \eps} \phi_1^2(a_i) (1 - \eps - a_i)
	= \sum_{a_i < \beta} \phi_1^2(a_i) (\beta - a_i).
$$
To control the terms in this sum, we need second-kind
Chebyshev polynomials because they satisfy the weighted
uniform bound~\eqref{eqn:Up-minmax}.
For a partition $q =  q_1 + q_2$, consider the polynomial
$$
\phi_1(s) := \frac{s^{q_1} U_{2q_2}\big( \sqrt{s/\beta} \big)}{U_{2q_2}\big(\sqrt{1/\beta}\big)} \in \mathcal{P}_q(1).
$$
According to~\eqref{eqn:psi-bound} and~\eqref{eqn:delta-bd}, this polynomial satisfies
\begin{equation} \label{eqn:my-phi1-bd}
\phi^2_1(s) (\beta - s) \leq \frac{4 \eps s^{2q_1} \delta^{2q_2+1}}{(1 - \delta^{2q_2+1})^2}
\quad\text{for $0 \leq s \leq \beta$}
\quad\text{where}\quad
\delta \leq \econst^{-2\sqrt{\eps}}.
\end{equation}
See Section~\ref{app:second-poly} for further discussion.

Using these facts, we may estimate that
\begin{equation*} %
\begin{aligned}
\sum_{a_i < 1 - \eps} \phi_1^2(a_i) (1 - \eps - a_i)
	&\leq 4 \eps \left( \sum_{a_i < \beta} a_i^{2q_1} \right)
	\delta^{2q_2+1}  \big(1 - \delta^{2q_2+1}\big)^{-2} \\
	&\leq 4 \eps \srank(q_1) \, \delta^{2q_2+1} \big(1 - \delta^{2q_2+1}\big)^{-2}.
\end{aligned}
\end{equation*}
In the last step, we bound the sum in terms of the stable rank~\eqref{eqn:srank}.
We rely on the normalization~\eqref{eqn:normalization} to recognize the stable rank.

Select $\phi = \phi_1$ in our probability bound~\eqref{eqn:Peps-penult}.
Using the last display, we arrive at
$$
P_{\eps} \leq \sqrt{2} \left[ 1 + \frac{ \big( 1 - \delta^{2q_2+1} \big)^2 }{8 \srank(q_1) \delta^{2q_2+1}} \right]^{-\ell/2}.
$$
We can develop a lower bound on the bracket as follows.
$$
1 + \frac{ \big( 1 - \delta^{2q_2+1} \big)^2 }{8 \srank(q_1) \delta^{2q_2+1}}
	\geq 1 + \frac{1 - 2\delta^{2q_2+1}}{8 \srank(q_1)\, \delta^{2q_2+1}}
	\geq \frac{1}{8 \srank(q_1)\,\delta^{2q_2+1}}.
$$
The last inequality follows from the fact that $\srank(q_1) \geq 1$
because $\mtx{A}$ is not a multiple of the identity.
Combining the last two displays, we obtain
\begin{equation} \label{eqn:prob-gapfree}
P_{\eps} \leq
	\sqrt{2} \left[ 8 \srank(q_1) \, \delta^{2q_2+1} \right]^{\ell / 2}
	\leq \sqrt{2} \left[ 8 \srank(q_1) \cdot \econst^{-2(2q_2+1) \sqrt{\eps}} \right]^{\ell/2}.
\end{equation}
The final relation is a consequence of the bound for $\delta$ in~\eqref{eqn:my-phi1-bd}.
This is the required statement.

\subsection{Probability Bound with a Spectral Gap}

Now, we use~\eqref{eqn:Peps-penult} to establish the probability bound that appears in Theorem~\ref{thm:gap}.
Recall that the spectral gap $\gamma$ is defined in~\eqref{eqn:spectral-gap}.
This time, set $\beta = 1 - \gamma$.  Since $1 - \gamma$ is the first eigenvalue smaller than one
and $\eps > 0$,
$$
\sum_{a_i \leq 1 - \eps} \phi_{2}^2(a_i) (1- \eps - a_i)
	\leq \sum_{a_i \leq 1 - \gamma} \phi_{2}^2(a_i).
$$
To control the sum, we need first-kind
Chebyshev polynomials because they satisfy the
uniform bound~\eqref{eqn:Tp-minmax}.
For a partition $q = q_1 + q_2$, construct the polynomial
\begin{equation} \label{eqn:my-phi2}
\phi_2(s) := \frac{s^{q_1} T_{q_2}((2/\beta)s - 1)}{T_{q_2}((2/\beta) - 1)} \in \mathcal{P}_q(1).
\end{equation}
According to~\eqref{eqn:delta-bd} and~\eqref{eqn:phi-bound}, this polynomial satisfies
\begin{equation} \label{eqn:my-phi2-bd}
\phi_{2}^2(s) \leq 4 s^{2q_1} \econst^{-4q_2\sqrt{\gamma}}
	\quad\text{for $0 \leq s \leq \beta$.}
\end{equation}
See Section~\ref{app:first-poly} for more details.

Using these facts, we calculate that
$$
\begin{aligned}
\sum_{a_i \leq 1 - \eps} \phi_{2}^2(a_i) (1- \eps - a_i)
	&\leq 4 \left( \sum_{a_i \leq 1 - \gamma} a_i^{2q_1} \right) \econst^{-4q_2\sqrt{\gamma}}
	\leq 4 \srank(q_1) \cdot \econst^{-4q_2\sqrt{\gamma}}.
\end{aligned}
$$
We have invoked~\eqref{eqn:normalization} to identify the stable rank~\eqref{eqn:srank}.

Instantiate the probability estimate~\eqref{eqn:Peps-penult} with $\phi = \phi_{2}$,
and substitute in the last display to obtain
\begin{equation} \label{eqn:prob-gap}
P_{\eps} \leq \sqrt{2} \left[ 1 + \frac{\eps}{8 \srank(q_1) \cdot \econst^{-4q_2 \sqrt{\gamma}}} \right]^{-\ell/2}
	\leq \sqrt{2} \left[ \frac{8 \srank(q_1)}{\eps} \cdot \econst^{-4q_2 \sqrt{\gamma}} \right]^{\ell/2}.
\end{equation}
We have used the numerical inequality $(1 + 1/s)^{-1} \leq s$, valid for $s > 0$.
This is the advertised result.

\section{A Bound for the Expected Error without a Spectral Gap}
\label{sec:expect-bound-gapfree}

In this section, we establish the expectation bound that appears in Theorem~\ref{thm:gapfree}.
To obtain this result, we simply
integrate the probability bound~\eqref{eqn:prob-gapfree}.  Surprisingly,
this approach appears to be more effective than a direct computation of
the expected error.  This insight yields a better expected
error bound than the one obtained in~\cite{KW92:Estimating-Largest}.

\subsection{Computing the Expectation}

We may express the expectation of the relative error as an integral:
$$
E := \Expect \mathrm{err}(\xi_{\max}(\mtx{A}; \mtx{\Omega};q))
	= \int_0^1 \Prob{ \mathrm{err}(\xi_{\max}(\mtx{A}; \mtx{\Omega};q)) \geq \eps} \idiff{\eps}
	= \int_0^1 P_{\eps} \idiff{\eps}.
$$
The limits of the integral follow from the fact that the relative error
falls in the interval $[0, 1]$.
We split the integral at a value $c > 0$, to be determined later.  Then make the estimates
$$
E \leq c + \int_c^1 P_{\eps} \idiff{\eps}
	\leq c + \sqrt{2}\, (8 \srank(q_1))^{\ell/2} \int_c^\infty \econst^{-(2q_2+1) \ell \sqrt{\eps}} \idiff{\eps}.
$$
To obtain the first inequality, we use the trivial bound $P_{\eps} \leq 1$.
The second inequality is a consequence of~\eqref{eqn:prob-gapfree}.
The remaining integral can be calculated by changing the variable and integrating by parts.
Indeed,
$$
\int_c^\infty \econst^{-p \sqrt{\eps}} \idiff{\eps}
	= 2 \left( \frac{\sqrt{c}}{p} + \frac{1}{p^2} \right) \econst^{-p \sqrt{c}}
	\quad\text{for $p > 0$.}
$$
Together, the last two displays yield
$$
E \leq c + 2 \sqrt{2} (8 \srank(q_1))^{\ell/2} \left( \frac{\sqrt{c}}{(2q_2+1)\ell} + \frac{1}{(2q_2+1)^2 \ell^2} \right)
	\econst^{- (2q_2+1)\ell \sqrt{c}}.
$$
Now, select the (optimal) value
$$
c = \left( \frac{\ell^{-1} \log 2 + \log(8 \srank(q_1))}{2(2q_2+1)} \right)^2.
$$
Combine the last two displays to reach
$$
E \leq \left( \frac{\ell^{-1}(2 + \log 2) + \log(8 \srank(q_1))}{2(2q_2+1)} \right)^{2}
$$
Bound the numerical constant by 2.70 to complete the proof.

\section{A Bound for the Expected Error with a Spectral Gap}
\label{sec:expect-bound-gap}

Last, we establish the expectation bounds for the relative
error that appear in Theorem~\ref{thm:gap}.
In this case, we achieve better results by a direct computation,
rather than by integrating the probability bound~\eqref{eqn:prob-gap}.
These arguments are inspired by the approach
in~\cite{KW92:Estimating-Largest}, but our
computations are technically easier because 
the random vector $\vct{x}$ has independent entries.

\subsection{Form of the Expected Error}

Fix a polynomial $\phi \in \mathcal{P}_q(1)$.  Take the expectation of
the error bound~\eqref{eqn:err-poly}:
\begin{equation*} %
\Expect \mathrm{err}(\xi_{\max}(\mtx{A}; \mtx{\Omega}; q))
	\leq \Expect\left[ \frac{\sum_{i > 1} X_i^2 \phi^2(a_i)(1-a_i)}{X_1^2 + \sum_{i > 1} X_i^2 \phi^2(a_i)} \right].
\end{equation*}
Set the parameter $\beta = 1 - \gamma$.
By the definition~\eqref{eqn:spectral-gap} of the spectral gap, each eigenvalue
of $\mtx{A}$ that exceeds $\beta$ equals the maximum eigenvalue $a_1 = 1$.  Therefore,
\begin{equation} \label{eqn:unexpected}
\Expect \mathrm{err}(\xi_{\max}(\mtx{A}; \mtx{\Omega}; q))
	\leq \Expect\left[ \frac{\sum_{a_i \leq \beta} X_i^2 \phi^2(a_i)}{X_1^2 + \sum_{a_i \leq \beta} X_i^2 \phi^2(a_i)} \right].
\end{equation}
By independence, we may compute the expectation with respect to $X_1$,
holding $X_i$ fixed for each $i$ where $a_i \leq \beta$.
The computation of this integral depends on the block size $\ell$.

\subsection{Error Bound for Block Size $\ell \geq 3$}

We begin with the case $\ell \geq 3$. %
Use the fact that $X_1 \sim \textsc{chi}(\ell)$ to compute that
$$
\Expect \left[ \frac{1}{X_1^2 + c} \right]
	= \frac{1}{2} \econst^{c/2} \int_1^\infty s^{-\ell/2} \econst^{-cs/2} \idiff{s}
	\leq \frac{1}{(\ell - 2) + c}
	\quad\text{for $\ell \geq 3$ and $c \geq 0$.}
$$
The first relation depends on a standard identity for the partial gamma function~\cite[Sec.~8.6.4]{NIST10:Handbook-Mathematical}.
The second relation is a classic bound for the exponential integral due to
Hopf~\cite[p.~26]{Hop34:Mathematical-Problems}; see~\cite{Gau59:Exponential-Integral} or~\cite[Sec.~5.1.19]{AS64:Handbook-Mathematical}.

With $c = \sum_{a_i \leq \beta} X_i^2 \phi^2(a_i)$, the last two displays imply that
\begin{equation*} \label{eqn:unsplit-l3}
\Expect \mathrm{err}(\xi_{\max}(\mtx{A}; \mtx{\Omega};q))
	\leq \Expect \left[ \frac{\sum_{a_i\leq\beta} X_i^2 \phi^2(a_i)}{(\ell - 2) + \sum_{a_i\leq \beta} X_i^2 \phi^2(a_i)} \right].
\end{equation*}
The function $t \mapsto t / (s + t)$ is concave, so Jensen's inequality
allows us to draw the expectation inside the function to reach
\begin{equation*} \label{eqn:unsplit-l4}
\Expect \mathrm{err}(\xi_{\max}(\mtx{A}; \mtx{\Omega};q))
	\leq \frac{\sum_{a_i\leq\beta} \phi^2(a_i)}{(\ell - 2) + \sum_{a_i\leq \beta} \phi^2(a_i)}.
\end{equation*}
Indeed, $\Expect X_i^2 = 1$ because $X_i$ is standard normal for each $i > 1$.

Introduce the polynomial $\phi = \phi_2$ from~\eqref{eqn:my-phi2} into the last display.
Using the upper bound~\eqref{eqn:my-phi2-bd} and the fact that $t \mapsto t/(s+t)$ is increasing,
we arrive at
$$
\Expect \mathrm{err}(\xi_{\max}(\mtx{A}; \mtx{\Omega}))
	\leq \frac{4 \srank(q_1) \cdot \econst^{-4q_2 \sqrt{\gamma}}}{(\ell - 2) + 4 \srank(q_1) \cdot \econst^{-4q_2 \sqrt{\gamma}}}.
$$
This is the advertised result for $\ell = 3$ in Theorem~\ref{thm:gap}.

\subsection{Error Bound for Block Size $\ell = 2$}

Now, assume that the block size $\ell = 2$.
In this case, $X_1 \sim \textsc{chi}(2)$.  Therefore, it holds that
$$
\Expect \left[ \frac{1}{X_1^2 + c} \right]
	= \frac{1}{2} \econst^{c/2} \int_1^\infty s^{-1} \econst^{-cs/2} \idiff{s}
	\leq \frac{1}{2} \log\left(1 + \frac{2}{c}\right)
	\quad\text{for $c > 0$.}
$$
The first relation is an immediate consequence of the definition of
the chi-square density and a change of variables.
The second relation is a classic bound for the exponential integral~\cite[Sec.~5.1.20]{AS64:Handbook-Mathematical}.

The rest of the argument follows the same path as in the case $\ell \geq 3$.
With $c = \sum_{a_i \leq \beta} X_i^2 \phi^2(a_i)$, we combine~\eqref{eqn:unexpected}
with the last display to obtain
\begin{equation*} \label{eqn:unsplit-l2}
\Expect \mathrm{err}(\xi_{\max}(\mtx{A}; \mtx{\Omega}))
	\leq \Expect \left[ \frac{ \sum_{a_i \leq \beta} X_i^2 \phi^2(a_i) }{2}
	\cdot \log\left(1 + \frac{2}{\sum_{a_i \leq \beta} X_i^2 \phi^2(a_i)} \right) \right].
\end{equation*}
The function $t \mapsto t \log(1 + 1/t)$ is concave and increasing, so Jensen's inequality
yields
$$
\Expect \mathrm{err}(\xi_{\max}(\mtx{A}; \mtx{\Omega}))
	\leq \frac{\sum_{a_i \leq \beta} \phi^2(a_i)}{2}
	\cdot \log\left(1 + \frac{2}{\sum_{a_i \leq \beta} \phi^2(a_i)} \right). 
$$
Select the polynomial $\phi = \phi_2$ from~\eqref{eqn:my-phi2},
and invoke the bound~\eqref{eqn:my-phi2-bd} to arrive at
$$
\begin{aligned}
\Expect \mathrm{err}(\xi_{\max}(\mtx{A}; \mtx{\Omega}))
	&\leq \frac{1}{2} \cdot 4 \srank(q_1) \econst^{-4 q_2 \sqrt{\gamma}}
	\cdot \log\left(1 + \frac{2}{4\srank(q_1) \econst^{-4 q_2 \sqrt{\gamma}}} \right). %
\end{aligned}
$$
This is the desired outcome for block size $\ell = 2$  in Theorem~\ref{thm:gap}.

\subsection{Error Bound for Block Size $\ell = 1$}

Finally, we complete the analysis for the case where the block size $\ell = 1$.
Use the fact that $X_1 \sim \textsc{chi}(1)$ to see that
$$
\Expect \left[ \frac{1}{X_1^2 + c} \right]
	= \frac{1}{\sqrt{c}} \econst^{c/2} \int_{\sqrt{c}}^\infty \econst^{-s^2/2} \idiff{s}
	\leq \sqrt{\frac{2\pi}{c}}. %
$$
The first relation follows from~\cite[Sec.~8.6.4]{NIST10:Handbook-Mathematical} after a change of variable.
The second relation is a well-known tail bound for a standard normal random variable.
Continuing as we have done before,
$$
\Expect \mathrm{err}(\xi_{\max}(\mtx{A}; \mtx{\Omega}))
	\leq \sqrt{2\pi} \Expect \left[ \sum_{a_i \leq \beta} X_i^2 \phi^2(a_i) \right]^{1/2}
	\leq \sqrt{2\pi} \left[ \sum_{a_i \leq \beta} \phi^2(a_i) \right]^{1/2}.
$$
Insert the bound~\eqref{eqn:my-phi2-bd} for the polynomial $\phi = \phi_2$ to arrive at
$$
\Expect \mathrm{err}(\xi_{\max}(\mtx{A}; \mtx{\Omega}))
	\leq \sqrt{8 \pi \srank(q_1) } \cdot \econst^{-2 q_2 \sqrt{\gamma}}.
$$
Of course, the relative error is always bounded above by one.
This is the result stated in Theorem~\ref{thm:gap}.

\section*{Acknowledgments}

This paper was produced under a subcontract to ONR Award 00014-16-C-2009.
The author thanks Shaunak Bopardikar, Petros Drineas, Ilse Ipsen, Youssef Saad,
Fadil Santosa, and Rob Webber for helpful discussions and feedback.
Mark Embree provided very detailed, thoughtful comments
that greatly improved the quality of the paper.

\bibliographystyle{myalpha}
\newcommand{\etalchar}[1]{$^{#1}$}

\end{document}